\normalfont\fontsize{16}{16}\bfseries}{\thesection}{1em}{}
\DeclareSymbolFont{arrows3}{LS2}{stixtt}{m}{n}
\DeclareMathSymbol{\squareulblack}{\mathord}{arrows3}{"88}
\theoremstyle{definition}
\newtheorem{definition}{Definition}
\newtheorem{Proposition}{Proposition}
\newtheorem{lemma}{Lemma}
\newtheorem{theorem}{Theorem}
\newtheorem{problem}{Problem}
\newtheorem*{definition*}{Definition}
\newtheorem*{lemma*}{Lemma}
\newtheorem*{theorem*}{Theorem}
\newtheorem*{corollary*}{Corollary}
\DeclareMathOperator*{\tr}{tr}
\DeclareMathOperator*{\rank}{rank}
\DeclareMathOperator*{\argmin}{argmin}
\renewcommand{\geq}{\geqslant}
\renewcommand{\leq}{\leqslant}
\newcommand{\R}{\mathbb{R}}
\newcommand{\cS}{\mathcal{S}}
\newcommand{\cR}{\mathcal{R}}
\newcommand{\normxi}[1]{\Vert{#1}\Vert_\xi}
\newcommand{\norms}[1]{\Vert{#1}\Vert_2}
\newcommand{\matdec}[1]{\mathrm{#1}}
\newcommand{\eqspace}{\\[1em]}
\begin{document}

\title{Subset selection for matrices in spectral norm}

\author{
Ivan Kozyrev
\thanks{Moscow Institute of Physics and Technology, Dolgoprudny, Moscow Region, 141701, Russian Federation.}
\and
Alexander Osinsky
\thanks{Skolkovo Institute of Science and Technology, Moscow, 121205, Russian Federation; \\ Marchuk Institute of Numerical Mathematics RAS, Moscow, 119333, Russian Federation.}
}

\maketitle

\begin{abstract}
    We address the subset selection problem for matrices, where the goal is to select a subset of $k$ columns from a \enquote{short-and-fat} matrix $X \in \R^{m \times n}$, such that the pseudoinverse of the sampled submatrix has as small spectral or Frobenius norm as possible. For the NP-hard spectral norm variant, we propose a new deterministic approximation algorithm. Our method refines the potential-based framework of spectral sparsification by specializing it to a single barrier function. This key modification enables direct, unweighted column selection, bypassing the intermediate weighting step required by previous approaches. It also allows for a novel adaptive update strategy for the barrier. This approach yields a new, explicit bound on the approximation quality that improves upon existing guarantees in key parameter regimes, without increasing the asymptotic computational complexity. Furthermore, numerical experiments demonstrate that the proposed method consistently outperforms its direct competitors. A complete C++ implementation is provided to support our findings and facilitate future research.\vspace{6pt}
    
    \noindent \textbf{Keywords:} subset selection, greedy algorithms, low-rank matrix approximations, feature selection, spectral sparsification, barrier method.\vspace{6pt}
    
    \noindent \textbf{AMS subject classifications:} 65F55, 90C27, 15A18, 62K05.
\end{abstract}

\section{Introduction}

\subsection{Subset selection for matrices}

Given a short-and-fat matrix $X \in \R^{m \times n}$ (i.e., $m < n$, and often $m \ll n$ in typical applications), the problem of selecting a subset of its columns that \enquote{optimally represents} the original matrix $X$ is often of interest. A common optimality criterion, arising in diverse applied areas, is to minimize the norm of the Moore-Penrose pseudoinverse of the submatrix formed by the selected columns. This objective leads to the following combinatorial problem:

\begin{problem}[Subset selection for matrices]\label{prm:subset_selection}
Given a full-rank matrix $X \in \R^{m \times n}$ with $m < n$ and a sampling parameter $k \in \overline{m,n}$ (where $\overline{a,b}$ denotes the set of integers $\{a, a+1, \dots, b\}$), find a set of column indices $\cS_{opt} \subseteq \overline{1, n}$ such that $|\cS_{opt}| \leq k$, $\rank(X_{\cS_{opt}}) = m$, and $\normxi{X_{\cS_{opt}}^\dag}$ is minimized. Formally,
\[
\cS_{opt} \in \argmin_{\cS \in \mathcal{F}(X, k)} \normxi{X_\cS^\dag}\,,
\]
where $\mathcal{F}(X, k) =\left\{\cS \subseteq \overline{1,n} \,:\, |\cS| \leq k \text{ and } \rank \left( X_\cS \right) = m \right\}$, and $\xi \in \{2, F\}$ denotes the spectral or Frobenius norm, respectively.
\end{problem}

Here, $X_\cS$ denotes the submatrix of $X$ containing the columns indexed by $\cS$, and $X_\cS^\dag$ is its Moore-Penrose pseudoinverse.

A brute-force approach to solving Problem~\ref{prm:subset_selection} involves evaluating $\normxi{X_\cS^\dag}$ for all $\cS \in \mathcal{F}(X, k)$. However, this is computationally infeasible for matrices of practical dimensions. Furthermore, the spectral norm version ($\xi=2$) of Problem~\ref{prm:subset_selection} is NP-hard, as shown by Çivril and Magdon-Ismail~\cite{subset_selection_complexity}. A similar NP-hardness result exists for the Frobenius norm case ($\xi=F$) when the sampling parameter $k=m$~\cite{doi:10.1287/moor.2021.1129}. These computational hardness results motivate the development of efficient approximation algorithms and heuristics.

\subsection{Applications}

Problem~\ref{prm:subset_selection} arises in numerous research fields. In statistics and machine learning, it is fundamental to optimal experimental design~\cite{Huan_Jagalur_Marzouk_2024, doi:10.1287/moor.2021.1129, allen2021near} and feature selection for tasks like k-means clustering~\cite{6488848, boutsidis2014randomized}. In graph theory, it corresponds to finding low-stretch spanning trees~\cite{faster_subset_selection} and is related to the algebraic connectivity of graphs~\cite{Fiedler1973, lamperskisimple}. Within numerical linear algebra, it underpins methods for sparse least-squares regression~\cite{BOUTSIDIS2014273}, rank-deficient least squares problems~\cite{10.1145/1132973.1132981, doi:10.1137/090780882}, and preconditioning~\cite{Arioli2015-eb}. Further applications are found in graph signal processing~\cite{chen2015discrete, tsitsvero2016signals} and multipoint boundary value problems~\cite{DEHOOG2007349, DEHOOG20111845}. Generalized versions of Problem~\ref{prm:subset_selection} broaden its applicability even further; see, for instance,~\cite{brown2024maximizingminimumeigenvalueconstant, lamperskisimple}.

Another key application area, and the primary motivation for the present work, lies in low-rank matrix approximation, specifically within the theory of $\matdec{CW}$ (column-based)~\cite{column_based_reconstruction, Osinsky2023-bg} and $\matdec{CUR}$ (cross)~\cite{doi:10.1137/140977898, GOREINOV19971} approximations. 

The achievable accuracy of these approximations relative to the optimal truncated $\matdec{SVD}$ depends directly on the ability to select rows from the leading $r$ singular vectors such that the resulting submatrix has a pseudoinverse with a small norm. This connection has been examined for pseudo-skeleton $\matdec{CUR}$ approximations in~\cite{OSINSKY2018221, OSINSKY2025} and for $\matdec{CW}$ approximations in~\cite{Osinsky2023-bg}. This task is a transposed version of Problem~\ref{prm:subset_selection}, where $m = r$, $k \geq r$ is the number of selected rows/columns for the approximation, and $n$ is the number of rows/columns of the initial matrix (typically much larger than $k$).

From a theoretical perspective, this implies that the relative accuracy of $\matdec{CW}$ and $\matdec{CUR}$ approximations depends on the quantities $t_\xi(m, k, n)$. These quantities capture the worst-case scenario (in terms of the minimum achievable norm of the pseudoinverse) for Problem~\ref{prm:subset_selection} over matrices with orthonormal rows:
\begin{equation}\label{eqn:t_m_k_n}
t_\xi(m, k, n) = \adjustlimits\sup_{X \in \mathcal{O}(m, n)} \min_{\cS \in \mathcal{F}(X, k)} \normxi{X_\cS^\dag}\,,\quad \xi \in \{2, F\}\,,
\end{equation}
where $\mathcal{O}(m, n)$ denotes the set of $m \times n$ matrices with orthonormal rows. The exact values of $t_\xi(m, k, n)$ remain unknown; current best upper bounds are derived from theoretical guarantees provided by approximation algorithms for Problem~\ref{prm:subset_selection}.

In practice, if an algorithm for Problem~\ref{prm:subset_selection} yields a submatrix $X_\cS$ satisfying a bound of the form $\normxi{X_\cS^\dag}^2 \leq f_\xi(m, k, n) \normxi{X^\dag}^2$, then low-rank approximations can be constructed with accuracy guarantees where $t_\xi^2(m, k, n)$ is effectively replaced by $f_\xi(m, k, n)$. Strictly speaking, achieving the theoretically best possible bound requires knowledge of the (truncated) singular value decomposition of the original matrix, which can be computationally expensive. Nevertheless, advances in solving Problem~\ref{prm:subset_selection} directly translate to improved low-rank approximation techniques.

\subsection{Our contributions}

This paper makes the following primary contributions:
\begin{enumerate}
    \item We develop an approximation algorithm (Algorithm~\ref{alg:main}) for the spectral norm variant of Problem~\ref{prm:subset_selection}. Our proposed algorithm is deterministic and greedy, drawing inspiration from Algorithm~3 in~\cite{faster_subset_selection}. It achieves a new, explicit bound on the norm of the pseudoinverse that improves upon existing results, while maintaining a computational complexity of $O(n k m^2)$ for dense matrices.
    
    \item We demonstrate the practical effectiveness of Algorithm~\ref{alg:main} through numerical experiments, showing it consistently outperforms other tested algorithms, including its direct predecessor, Algorithm~3 in~\cite{faster_subset_selection}.
\end{enumerate}

The key properties of Algorithm~\ref{alg:main} are formally stated in Theorem~\ref{thm:main}. For a selected submatrix $X_\cS$, the algorithm guarantees the following bound:
\begin{equation}\label{eqn:tighter_bound}
    \normxi{X_\cS^\dag}^2 \leq \frac{n}{m}\left(\frac{\sqrt{(k - 1)m + 1} - 1}{\sqrt{(k - 1)m + 1} - k}\right)^2 \normxi{X^\dag}^2\,, \quad \xi \in \{2,F\}\,.
\end{equation}
This expression has a removable singularity at $m = k = 1$; in this specific case, the bound is understood as the limit as $k \to 1+$, yielding $\normxi{X_\cS^\dag}^2 \leq n \normxi{X^\dag}^2$. 

The computational complexity of Algorithm~\ref{alg:main} is $O(k m^3 + k m T_X)$, where $T_X$ is the cost of multiplying $X^T$ by an $m$-dimensional vector. The $k m T_X$ term stems from the algorithm's $k$ iterations; the dominant cost in each is a matrix product equivalent to $m$ such vector multiplications. For a general dense matrix where $T_X=O(nm)$, the total complexity becomes $O(n k m^2)$.

While Equation~\ref{eqn:tighter_bound} provides a tight bound, the following slightly looser but more interpretable bound also holds:
\begin{equation}\label{eqn:looser_bound}
    \normxi{X_\cS^\dag}^2 \leq \frac{n}{\left(\sqrt{k} - \sqrt{m - 1}\right)^2} \normxi{X^\dag}^2\,, \quad \xi \in \{2,F\}\,.
\end{equation}
Notably, for $m=1$, both bounds~\eqref{eqn:tighter_bound} and~\eqref{eqn:looser_bound} are equivalent and optimal.

\subsection{Related Work}\label{sec:related_work}

This section reviews work relevant to Problem~\ref{prm:subset_selection}, organized into two main areas: approximation algorithms and lower bounds.

\subsubsection{Approximation algorithms}\label{sec:approximation_algorithms}

Numerous deterministic and randomized algorithms have been proposed to find approximate solutions to Problem~\ref{prm:subset_selection}; a comprehensive survey can be found in~\cite{faster_subset_selection}. More recent contributions include Algorithm~1 in~\cite{subset_selection_with_fixed_blocks} and Algorithm~3 in~\cite{Osinsky23}. Table~\ref{tbl:subset_selection_algorithms} lists deterministic algorithms constituting direct competitors to our Algorithm~\ref{alg:main}, by virtue of being specifically designed for the spectral norm variant of Problem~\ref{prm:subset_selection} and representing relevant existing approaches.

\begin{table}[htbp] 
\begin{NiceTabular*}{\textwidth}{@{\extracolsep{\fill}} |l|c|c|c|} 
\hline
Algorithm & Sampling & Upper bound on $\frac{\norms{X_\cS^\dag}^2}{\norms{X^\dag}^2}$ & Operation Count \\ 
\hline \hline
\Block{2-1}{Alg.~2 in~\cite{faster_subset_selection}} & \Block{2-1}{$k \geq m$} & \Block{2-1}{$1 + \frac{m(n - k)}{k - m + 1}$} & \Block{2-1}{$O(nm^2 + nm(n - k))$} \\
& & & \\
\hline
\Block{2-1}{Alg.~3 in~\cite{faster_subset_selection}} & \Block{2-1}{$k > m$} & \Block{2-1}{$\left(\frac{\sqrt{n} + \sqrt{k}}{\sqrt{k} - \sqrt{m}}\right)^2$} & \Block{2-1}{$O(nkm^2)$} \\
& & & \\
\hline
\Block{2-1}{Alg.~1 in~\cite{subset_selection_with_fixed_blocks}} & \Block{2-1}{$k \geq m$} & \Block{2-1}{$\frac{n^2}{\left( \sqrt{(k + 1)(n - m)} - \sqrt{m(n - k - 1)} \right)^2 }$} & \Block{2-1}{$O(nk m^{\theta} )$} \\\
& & & \\
\hline
\end{NiceTabular*}
\caption{Summary of deterministic approximation algorithms for the spectral norm version of Problem~\ref{prm:subset_selection}. Here, $\theta > 2$ is the matrix multiplication exponent; we assume a fixed target precision, thus logarithmic factors in precision are omitted.}
\label{tbl:subset_selection_algorithms}
\end{table}

A direct comparison of the bounds in Table~\ref{tbl:subset_selection_algorithms} with the looser bound~\eqref{eqn:looser_bound} for our Algorithm~\ref{alg:main} reveals the following regimes:
\begin{itemize}
    \item For $k \leq m + 3$, Algorithm~2 in~\cite{faster_subset_selection} provides a tighter bound.
    \item For $m + 3 < k \leq n/m - 1$ (assuming this interval is non-empty), our Algorithm~\ref{alg:main} exhibits the best theoretical bound among these methods.
    \item For larger $k$ (specifically $k > n/m - 1$), the bound of Algorithm~1 in~\cite{subset_selection_with_fixed_blocks} becomes tighter.
\end{itemize}

In terms of complexity, Algorithm~\ref{alg:main} and Algorithm~3 in~\cite{faster_subset_selection} are identical at $O(nkm^2)$. Algorithm~2 in~\cite{faster_subset_selection} is efficient only for large values of $k$, as its complexity can otherwise reach $O(n^2 m)$. The complexity of Algorithm~1 in~\cite{subset_selection_with_fixed_blocks} depends on the matrix multiplication exponent $\theta$. Assuming classical matrix multiplication ($\theta = 3$, even the simplest Strassen method does not bring any improvements in practice until $m$ reaches at least 500~\cite{fast_shtrassen}), our algorithm is faster by a factor of $O(m)$, and the constant is also much lower due to the fact that the main cost in Algorithm~1 in~\cite{subset_selection_with_fixed_blocks} comes from computing roots of the characteristic polynomials and is dependent on the numerical precision, which must be high enough to calculate the roots accurately.

\subsubsection{Lower bounds}

Many approximation algorithms for Problem~\ref{prm:subset_selection}, including our Algorithm~\ref{alg:main}, provide guarantees of the form $\normxi{X_\cS^\dag}^2 \leq f_\xi(m, k, n) \normxi{X^\dag}^2$. A natural question arises: How close are these bounds to the theoretical optimum? This leads to the consideration of lower bounds.

\begin{definition}
    A non-negative number $\gamma$ is called a \textit{lower bound} for given $m \leq k \leq n$ if
    \[
    \gamma \leq \adjustlimits\sup_{X \in GL(m, n)} \min_{\cS \in \mathcal{F}(X, k)} \frac{\normxi{X_\cS^\dag}^2}{\normxi{X^\dag}^2}\,,\quad \xi \in \{2, F\}\,,
    \]
    where $GL(m, n)$ denotes the set of full-rank matrices of size $m \times n$. This concept is analogous to $t_\xi$ introduced in~\eqref{eqn:t_m_k_n}, but here $X$ can be arbitrary full-rank matrix, which leads to the different behaviour for $\xi = F$. 
\end{definition}

Several lower bounds for both spectral and Frobenius norms have been derived in~\cite{faster_subset_selection} and improved in \cite{Osinsky23} and \cite{Osinsky2023-bg}. Specifically, for both $\xi = 2$ and $\xi = F$ the current best lower bound is $\gamma = (n-m+1)/(k-m+1)$ (Proposition 1 in~\cite{Osinsky23}). For the Frobenius norm case ($\xi = F$), this bound is tight, as it is achieved by Algorithm~1 in~\cite{faster_subset_selection}.

Examining the bound for Algorithm~\ref{alg:main} \eqref{eqn:tighter_bound}, which is the same for spectral and Frobenius norms, we obtain that for fixed $n/k$ and $n \to \infty$, it is 
\[
\frac{n}{k} \left( 1 + O \left( \sqrt{\frac{m}{k}} \right) \right)\,,
\]
asymptotically matching the lower bounds mentioned earlier.

\subsection{Organization}

The remainder of this paper is organized as follows. Section~\ref{sec:algorithm} develops our proposed algorithm and its theoretical underpinnings, culminating in the proof of our main result, Theorem~\ref{thm:main}. We conclude this section by situating our method within the spectral sparsification framework and highlighting its key distinctions from prior work. Section~\ref{sec:numerical_experiments} then presents numerical experiments comparing the performance of Algorithm~\ref{alg:main} against existing methods.

\section{A deterministic greedy selection algorithm}\label{sec:algorithm}

While the core idea of our algorithm is related to Algorithm~3 from~\cite{faster_subset_selection} and ultimately to~\cite{twice_ramanujan_sparcifiers}, our framework incorporates significant modifications. We therefore present a self-contained proof of all results.

\subsection{The goal of the greedy selection process}

Let $\cS \subseteq \overline{1, n}$ be the set of currently selected column indices, and $\cR = \overline{1,n} \setminus \cS$ be the set of remaining (candidate) indices. To establish a greedy selection process, we must define a selection criterion that guides the choice of columns at each step. Directly minimizing $\norms{X_\cS^\dag}^2$, however, is a poor choice for two reasons:
\begin{itemize}
    \item During the selection process, matrix $X_\cS$ will have rank less than $m$. When the rank jumps from $r$ to $r + 1$, the smallest non-zero singular value becomes very small, causing $\norms{X_\cS^\dag}^2$ to explode. A greedy strategy minimizing this norm would therefore avoid selecting columns that increase the rank, which is antithetical to the goal of finding a full-rank well-conditioned submatrix.
    
    \item A greedy approach requires evaluating the objective for each candidate column at every step. Calculating the spectral norm of the pseudoinverse is computationally expensive, with a complexity that is cubic in the matrix dimensions.
\end{itemize}

Therefore, we require an alternative objective that ensures a small value of $\norms{X_\cS^\dag}^2$ when the final subset of size $k$ is formed. Consider the matrix $Y$ built from the currently selected columns $\cS$:
\[
Y = X_\cS X_\cS^T = \sum_{j \in \cS} x_j x_j^T\,.
\]
Once $X_\cS$ has rank $m$, its squared spectral pseudoinverse norm is given by $1/\lambda_m(Y)$, where $\lambda_m(Y)$ is the smallest eigenvalue of $Y$. Our goal is therefore equivalent to maximizing this smallest eigenvalue. Following~\cite{twice_ramanujan_sparcifiers}, we approach this by employing a barrier function.

\begin{definition}\label{def:barrier_func}
    For a symmetric matrix $Y \in \R^{m \times m}$ and a scalar $l < \lambda_m(Y)$,
    \[
    \displaystyle \Phi_l(Y) = \tr (Y - lI)^{-1} = \sum_{j=1}^m \frac{1}{\lambda_j(Y) - l}
    \]
    is a \textit{barrier function} (or \textit{potential}), which measures how \enquote{far} the eigenvalues of $Y$ are from the barrier $l$. The potential $\Phi_l(Y)$ is well-defined only when $l < \lambda_m(Y)$, a condition we will maintain through our analysis.
\end{definition}

This function offers several advantages:
\begin{itemize}
    \item Its value provides a lower bound on $\lambda_m(Y)$ and thus an upper bound on the final value of $\norms{X_\cS^\dag}^2 = \lambda_m^{-1}(Y)$ (when $|\cS|=k$ and $Y=X_\cS X_\cS^T$), since
    \begin{equation}\label{eqn:l_bound}
        \frac{1}{\lambda_m(Y) - l} \leq \Phi_{l}(Y) \implies \lambda_m(Y) \geq l + \frac{1}{\Phi_{l}(Y)}\,.
    \end{equation}
    \item It is monotonically increasing with respect to $l$.
    \item For a fixed barrier level $l$ and any column $x$, $\Phi_{l}(Y + xx^T) \leq \Phi_{l}(Y)$. This follows from Weyl's inequality, which guarantees that the eigenvalues of $Y$ do not decrease when a positive semidefinite matrix is added.
    \item Its value can be efficiently updated when a new column is added to $\cS$, as will be shown in~\eqref{eqn:Phi_recalculation}.
\end{itemize}

This leads to our overall strategy, which is guided by the lower bound in \eqref{eqn:l_bound}. At each step, we seek to advance the barrier $l$, while keeping the potential $\Phi_l(Y)$ bounded. The selection of the next column is based on evaluating the potential $\Phi_{l'}(Y + x_jx_j^T)$ for a special, precalculated $l'$ and all candidate columns $x_j$. As we will show, this evaluation can be performed efficiently, forming the core of our greedy algorithm.

\subsection{One step of the greedy selection process}

Here, we detail the core mechanism of our greedy algorithm: the selection of a single new column. The goal is to find a column that allows us to advance the barrier $l$ significantly, while ensuring that the potential $\Phi_l(Y)$ remains controlled. To formalize this, we seek a guaranteed advancement of the barrier, $\delta$, such that selecting a suitable column ensures the potential does not increase. The analysis is performed for a given potential value, which we denote by $\varepsilon = \Phi_l(Y)$.

The following lemma is central to our analysis. It establishes an inequality that determines $\delta$, forming the basis for our column selection strategy and the subsequent theoretical guarantees. The lemma operates under the assumption that $X$ has orthonormal rows (i.e., $XX^T = I$); this constraint will be relaxed later.

\begin{lemma}\label{lem:delta_inequality}
    Fix $X \in \R^{m \times n}$ ($XX^T = I$) and set $\cS \subseteq \overline{1, n}$ of cardinality $i < n$. Let $Y = X_\cS X_\cS^T$, $l < \lambda_m(Y)$ and $\Phi_l(Y) = \varepsilon$. If $\delta < \varepsilon^{-1}$ satisfies
    \begin{equation}\label{eqn:delta_inequality}
        \frac{1 - l - m/\varepsilon}{n - i}(1 - \varepsilon \delta) \geq \delta \left( 1 - \frac{\varepsilon \delta}{m}\right)\,,
    \end{equation}
    then $\lambda_m(Y) > l + \delta$, and there exists $j \in \cR = \overline{1,n} \setminus \cS$, such that $\Phi_{l + \delta}(Y + x_j x_j^T) \leq \Phi_l(Y) = \varepsilon$.
\end{lemma}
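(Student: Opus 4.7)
The plan is to follow the Sherman--Morrison / averaging strategy standard in spectral sparsification, specialized here to the single-barrier setup. First, the bound $\lambda_m(Y) > l + \delta$ is immediate from $\lambda_m(Y) \geq l + 1/\Phi_l(Y) = l + 1/\varepsilon$ (inequality~\eqref{eqn:l_bound}) together with the assumption $\delta < 1/\varepsilon$; this also guarantees that $M := (Y - (l+\delta)I)^{-1}$ is a well-defined positive definite matrix, so that $\Psi := \Phi_{l+\delta}(Y) = \tr M$ and $\Theta := \tr M^2$ are meaningful.

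Second, for any column $x$ the Sherman--Morrison identity gives
\[
\Phi_{l+\delta}(Y + xx^T) \;=\; \Psi \;-\; \frac{x^T M^2 x}{1 + x^T M x}.
\]
Setting $\alpha := \Psi - \varepsilon$, the existence claim reduces to finding $j \in \cR$ with $x_j^T(M^2 - \alpha M)x_j \geq \alpha$. By pigeonhole it suffices to prove the averaged inequality $\sum_{j \in \cR} x_j^T(M^2 - \alpha M)x_j \geq (n-i)\alpha$. Using $XX^T = I$ and $\sum_{j \in \cS} x_j x_j^T = Y$, together with the trace identities $\tr(MY) = m + (l+\delta)\Psi$ and $\tr(M^2Y) = \Psi + (l+\delta)\Theta$ (obtained by writing $Y = M^{-1} + (l+\delta)I$), the left-hand side evaluates in closed form to $(1 - l - \delta)(\Theta - \alpha\Psi) - \Psi + \alpha m$. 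The averaging condition therefore reduces to the single trace inequality
\[
(1 - l - \delta)(\Theta - \alpha\Psi) \;\geq\; \Psi + (n - i - m)\alpha. \qquad (\star)
\]

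The main step is then to prove $(\star)$ under the lemma's hypothesis. I would parametrize by the eigenvalues $\mu_i := \lambda_i(Y) - l$ of $Y - lI$, which satisfy $\sum_i \mu_i^{-1} = \varepsilon$ and $\mu_i \in (\delta,\, 1-l]$, the upper bound coming from $Y \preceq XX^T = I$. Two sharp inequalities are available, both of which become equalities precisely at the extremal configuration $\mu_1 = \cdots = \mu_m = m/\varepsilon$: Jensen applied to the convex function $y \mapsto y/(1 - \delta y)$, or equivalently Chebyshev's sum inequality on the similarly-ordered sequences $\{1/\mu_i\}$ and $\{1/(\mu_i - \delta)\}$, gives $\Psi \geq m\varepsilon/(m - \delta\varepsilon)$; and Cauchy--Schwarz gives $\Theta \geq \Psi^2/m$. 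Substituting the equality values into $(\star)$ and clearing denominators recovers, after routine algebra, exactly the inequality~\eqref{eqn:delta_inequality} in the lemma's statement.

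The hard part is to argue that the extremal (all-equal) configuration is indeed the \emph{worst} case for $(\star)$ among all feasible eigenvalue tuples. The natural strategy is a Lagrangian / constrained-optimization analysis: the residual $(1-l-\delta)(\Theta - \alpha\Psi) - \Psi - (n-i-m)\alpha$ is studied as a function of $(\mu_1, \ldots, \mu_m)$ subject to $\sum \mu_i^{-1} = \varepsilon$, and one verifies that $\mu_1 = \cdots = \mu_m = m/\varepsilon$ is an interior critical point (it lies in the interior because $\varepsilon \geq m/(1-l)$, which is precisely what makes the factor $1 - l - m/\varepsilon$ in the lemma's inequality non-negative), a second-order check together with a separate boundary argument for the case $\mu_i = 1-l$ should establish that it is a global minimum. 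Combined with the equality matching of the previous paragraph, this closes the proof.
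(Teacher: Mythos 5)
Your setup — the reduction via Sherman--Morrison, the averaging over $j \in \cR$ using $\sum_{j\in\cR}x_jx_j^T = I - Y$, and the resulting trace inequality $(\star)$ — is exactly the paper's route (its ``clever zero'' step $I - Y = (1-l-\delta)I - (Y-(l+\delta)I)$ produces the same inequality you derive from the identities $\tr(MY) = m + (l+\delta)\Psi$, $\tr(M^2Y) = \Psi + (l+\delta)\Theta$). The gap is in how you then establish $(\star)$. You verify it only at the all-equal configuration $\mu_1 = \dots = \mu_m = m/\varepsilon$ and defer the claim that this is the global worst case to an uncarried-out Lagrangian analysis (``a second-order check together with a separate boundary argument should establish that it is a global minimum''). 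That deferred step is the entire mathematical content of the lemma, and a second-order check at one interior critical point does not give global optimality over the constraint set $\{\sum_i \mu_i^{-1} = \varepsilon\}$. Moreover, the shortcut you hint at — substituting the two ``sharp'' bounds $\Psi \geq m\varepsilon/(m-\delta\varepsilon)$ and $\Theta \geq \Psi^2/m$ into $(\star)$ — is not logically valid: the residual of $(\star)$ is increasing in $\Theta$ but \emph{decreasing} in $\Psi$ (through the terms $-(1-l-\delta)\alpha\Psi$ and $-\Psi - (n-i-m)\alpha$ with $\alpha = \Psi - \varepsilon$), so plugging in a lower bound for $\Psi$ goes the wrong way, and the two bounds are coupled through the eigenvalues in any case.

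The paper resolves precisely this difficulty with a different chain of inequalities (Proposition~\ref{stt:first_braces_inequality}): it splits the averaged condition into two bracketed terms, bounds one by Chebyshev's sum inequality, and bounds the other using the \emph{weighted} Cauchy--Schwarz inequality $\bigl(\sum_j a_jb_j\bigr)^2 \leq \bigl(\sum_j a_j^2 b_j\bigr)\bigl(\sum_j b_j\bigr)$ with $a_j = (\lambda_j - l - \delta)^{-1}$, $b_j = (\lambda_j - l)^{-1}$, together with Chebyshev again and the lower bound $\Phi_{l+\delta}(Y) \geq \varepsilon/(1 - \varepsilon\delta/m)$. This correctly orients every estimate (each intermediate bound is applied to a quantity appearing with the right sign) and is tight at the all-equal configuration, which is why the final condition matches your formula. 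To close your proof you would need either to carry out the global optimization rigorously (including the boundary cases) or, more simply, to replace the plain $\Theta \geq \Psi^2/m$ by the weighted Cauchy--Schwarz step above.
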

 
\begin{proof}
We define a column $x_j$ (for $j \in \cR$) as \enquote{good} if, for the new barrier $l' = l + \delta$, it satisfies $\Phi_{l'}(Y + x_jx_j^T) \leq \Phi_l(Y) = \varepsilon$. This proof derives a sufficient condition on $\delta$ to guarantee the existence of at least one such \enquote{good} column.

\begin{enumerate}[wide]
    \item First, we restrict $\delta < \varepsilon^{-1}$. This ensures that the new barrier $l' = l + \delta$ remains below the smallest eigenvalue of $Y$:
    \[
    \frac{1}{\lambda_m(Y) - l} \leq \Phi_l(Y) = \varepsilon \implies \lambda_m(Y) - l \geq \frac{1}{\varepsilon}\,.
    \]
    Since $\delta < \varepsilon^{-1}$, we have $\lambda_m(Y) > l + \delta$. This proves the first auxiliary claim of the lemma and ensures that potentials such as $\Phi_{l'}(Y)$ and $\Phi_{l'}(Y + xx^T)$ are well-defined, allowing us to proceed with the main proof.
    
    \item Now, for an arbitrary column $x$
    \begin{equation}\label{eqn:Phi_recalculation}
    \Phi_{l'}\left(Y + xx^T\right) = \tr \left( Y - l'I + xx^T \right)^{-1} \overset{(*)}{=} \Phi_{l'}(Y) - \frac{x^T(Y-l'I)^{-2}x}{1 + x^T(Y - l'I)^{-1}x}\,,  
    \end{equation}
    $(*)$ follows from the Sherman-Morrison formula and the cyclic property of the trace. Thus, for $\delta > 0$ we can write down
    \[
    \Phi_{l'}\left(Y + xx^T\right) \leq \Phi_l(Y) \Longleftrightarrow 1 + x^T(Y - l'I)^{-1}x - \frac{x^T(Y-l'I)^{-2}x}{\Phi_{l'}(Y) - \Phi_l(Y)} \leq 0\,.
    \]

    \item To guarantee that at least one \enquote{good} column exists, we employ the following averaging argument:
    \[
    \sum_{j \in \cR}\left( 1 + x_j^T(Y - l'I)^{-1}x_j - \frac{x_j^T(Y-l'I)^{-2}x_j}{\Phi_{l'}(Y) - \Phi_l(Y)} \right) \leq 0\,.
    \]
    Using the cyclic property and linearity of the trace, this is equivalent to
    \[
    |\cR| + \tr\left((Y - l'I)^{-1}\sum_{j \in \cR}x_jx_j^T\right) - \frac{\tr\left((Y - l'I)^{-2}\sum_{j \in \cR}x_jx_j^T\right)}{\Phi_{l'}(Y) - \Phi_l(Y)} \leq 0\,.
    \]
    Since $X$ has orthonormal rows, $\sum_{j \in \cR}x_jx_j^T = I - Y$, and condition becomes
    \[
    \frac{\tr\left((Y - l'I)^{-2}\left[I - Y\right]\right)}{\Phi_{l'}(Y) - \Phi_l(Y)} - \tr\left((Y-l'I)^{-1}\left[I - Y\right]\right) \geq n - i\,.
    \]
    Using the \enquote{clever zero} trick, $I - Y = (1 - l')I - (Y - l'I)$, we obtain
    \[
    (1-l')\left(\frac{\tr(Y - l'I)^{-2}}{\Phi_{l'}(Y) - \Phi_l(Y)} - \Phi_{l'}(Y)\right) + \left( m - \frac{\Phi_{l'}(Y)}{\Phi_{l'}(Y) - \Phi_l(Y)}\right) \geq n - i\,. 
    \]

    \item To simplify this condition, we bound its constituent terms. Let $\lambda_j \equiv \lambda_j(Y)$. The expression in the second pair of brackets can be bounded as follows:
    \begin{align*}
         m - \frac{\Phi_{l'}(Y)}{\Phi_{l'}(Y) - \Phi_l(Y)} &= m - \frac{\Phi_{l'}(Y)}{\sum_{j = 1}^m \left[ (\lambda_j - l')^{-1} - (\lambda_j - l)^{-1} \right]} \\
         &= m - \frac{\Phi_{l'}(Y)}{\delta \sum_{j = 1}^m (\lambda_j - l')^{-1}\cdot(\lambda_j - l)^{-1}}\\ &\overset{(*)}{\geq} m - \frac{m\Phi_{l'}(Y)}{\delta \Phi_l(Y)\Phi_{l'}(Y)} = -\frac{m}{\varepsilon}\left( \frac{1}{\delta} - \varepsilon \right)\,,
    \end{align*}
    $(*)$ follows from applying Chebyshev's sum inequality to denominator.

    A bound for the expression in the first pair of brackets is more involved and is deferred to Proposition~\ref{stt:first_braces_inequality}. Applying these two bounds yields the sufficient condition: 
    \[
    \left( 1 - l - \frac{m}{\varepsilon} \right) \left(\frac{1}{\delta} - \varepsilon\right) \geq (n - i) \left( 1 - \frac{\varepsilon \delta}{m}\right)\,.
    \]

    \item For $\delta = 0$ every column indexed in $\cR$ is \enquote{good}, and to include that case we can multiply inequality by $\delta$. The formulation in~\eqref{eqn:delta_inequality} correctly includes this trivial case.\qedhere
\end{enumerate}
\end{proof}

\begin{Proposition}[Stronger version of Claim 3.6 from~\cite{twice_ramanujan_sparcifiers}]
\label{stt:first_braces_inequality} 
For symmetric matrix $Y \in \R^{m \times m}$ and $l' = l + \delta$ where $\delta > 0$ and $l' < \lambda_m(Y)$,
\begin{equation}\label{eq:prop1}
\frac{\tr(Y - l'I)^{-2}}{\Phi_{l'}(Y) - \Phi_l(Y)} - \Phi_{l'}(Y) \geq \frac{1/\delta - \varepsilon}{1 - \varepsilon \delta / m}\,.
\end{equation}
\end{Proposition}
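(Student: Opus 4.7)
The plan is to reduce \eqref{eq:prop1} to two applications of Cauchy-Schwarz after an algebraic substitution. Set $a_j = (\lambda_j(Y) - l')^{-1}$ and $b_j = (\lambda_j(Y) - l)^{-1}$, and let $A = \Phi_{l'}(Y) = \sum_j a_j$ and $B = \Phi_l(Y) = \varepsilon = \sum_j b_j$. The partial-fraction identity $a_j - b_j = \delta a_j b_j$ (equivalent to $a_j/b_j = 1 + \delta a_j$ and to $a_j = b_j/(1 - \delta b_j)$) gives $\Phi_{l'}(Y) - \Phi_l(Y) = \delta \sum_j a_j b_j$ and $\tr(Y - l'I)^{-2} = \sum_j a_j^2$, converting \eqref{eq:prop1} into a purely algebraic inequality in $\{a_j\}, \{b_j\}$.

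Step one is Cauchy-Schwarz via the splitting $a_j = \sqrt{b_j} \cdot (a_j/\sqrt{b_j})$: $A^2 \leq B \sum_j a_j^2/b_j = B(A + \delta \sum_j a_j^2)$, where the last equality uses $a_j/b_j = 1 + \delta a_j$. Rearranging yields $\sum_j a_j^2 \geq A(A-B)/(\delta B)$. Dividing by $\Phi_{l'}(Y) - \Phi_l(Y) = A - B$ and subtracting $\Phi_{l'}(Y) = A$ produces the preliminary bound
\[
\frac{\tr(Y - l'I)^{-2}}{\Phi_{l'}(Y) - \Phi_l(Y)} - \Phi_{l'}(Y) \geq \frac{A(1 - \delta B)}{\delta B}.
\]

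Step two is the lower bound $A \geq mB/(m - \delta B)$. Since $a_j = b_j/(1 - \delta b_j)$ and the map $b \mapsto b/(1 - \delta b)$ is convex on $(0, 1/\delta)$, this is immediate from Jensen's inequality applied to the uniform average of $b_1, \ldots, b_m$ (an alternative two-line Cauchy-Schwarz via $B = \sum_j a_j(1 - \delta b_j)$ and $\sum_j b_j^2 \geq B^2/m$ works equally well). Multiplying this bound by the positive factor $(1 - \delta B)/(\delta B)$, where positivity of $1 - \delta B$ follows from the standing hypothesis $\delta < \varepsilon^{-1}$ of Lemma~\ref{lem:delta_inequality}, gives
\[
\frac{A(1 - \delta B)}{\delta B} \geq \frac{m(1 - \delta B)}{\delta(m - \delta B)} = \frac{1/\delta - \varepsilon}{1 - \delta \varepsilon/m},
\]
which combined with step one yields \eqref{eq:prop1}.

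The main obstacle is identifying the correct Cauchy-Schwarz pairing in step one: among the natural splittings of $a_j$, only $\sqrt{b_j} \cdot (a_j/\sqrt{b_j})$ causes $\sum_j a_j^2/b_j$ to simplify, via $a_j/b_j = 1 + \delta a_j$, into $A + \delta \sum_j a_j^2$, producing the clean inequality $A^2 \leq B(A + \delta \sum_j a_j^2)$ that isolates $\sum_j a_j^2$. Thereafter both steps are routine; as a sanity check, equality holds throughout precisely when all $\lambda_j(Y)$ coincide, matching the tightness one expects for \eqref{eq:prop1}.
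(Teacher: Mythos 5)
Your proof is correct. It rests on the same two raw ingredients as the paper's --- the partial-fraction identity $(\lambda_j-l')^{-1}-(\lambda_j-l)^{-1}=\delta(\lambda_j-l')^{-1}(\lambda_j-l)^{-1}$ and the potential bound $\Phi_{l'}(Y)\geq\varepsilon/(1-\varepsilon\delta/m)$, which the paper derives via Chebyshev's sum inequality and you derive, equivalently, via Jensen applied to the convex map $b\mapsto b/(1-\delta b)$ --- but the main estimate is organized differently. The paper splits $\tr(Y-l'I)^{-2}/(\Phi_{l'}(Y)-\Phi_l(Y))$ as $1/\delta$ plus the ratio $\sum_j a_j^2b_j/\sum_j a_jb_j$ (in your notation), and then needs one Cauchy--Schwarz to dominate $\Phi_{l'}(Y)$ by that ratio and a further Chebyshev to bound the ratio below by $\Phi_{l'}(Y)/m$ before recombining. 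Your single Cauchy--Schwarz $A^2\leq B\sum_j a_j^2/b_j$, simplified through $a_j/b_j=1+\delta a_j$, bounds $\sum_j a_j^2\geq A(A-B)/(\delta B)$ in one stroke and eliminates the intermediate ratio, so your route is slightly leaner (one Cauchy--Schwarz plus one Jensen versus one Cauchy--Schwarz plus two Chebyshevs); the parenthetical alternative for step two also goes through, since $B^2\leq A\sum_j a_j(1-\delta b_j)^2=A\bigl(B-\delta\sum_j b_j^2\bigr)\leq A(B-\delta B^2/m)$. One shared caveat: both your final multiplication and the paper's step $(a)$ require $1-\varepsilon\delta\geq 0$, i.e.\ $\delta\leq\varepsilon^{-1}$, a hypothesis missing from the proposition's statement but guaranteed in the only context where it is used (Lemma~\ref{lem:delta_inequality}); you are right to flag it explicitly. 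Your equality characterization (all $\lambda_j(Y)$ coincident) is also accurate.
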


\begin{proof}
    \begin{enumerate}[wide]
    \item We start by deriving a lower bound for $\Phi_{l'}(Y)$. By writing $\Phi_{l'}(Y) = \Phi_{l}(Y) + (\Phi_{l'}(Y) - \Phi_{l}(Y))$ and expanding the difference term, we have:
    \[
    \Phi_{l'}(Y) = \varepsilon + \delta\sum_{j = 1}^m (\lambda_j - l')^{-1}\cdot(\lambda_j - l)^{-1} \overset{(*)}{\geq} \varepsilon + \frac{\varepsilon\delta \Phi_{l'}(Y)}{m}\,,
    \]
    $(*)$ follows from Chebyshev's sum inequality. The above implies 
    \begin{equation}\label{eqn:phi_l_new_bound}
        \Phi_{l'}(Y) \geq \frac{\varepsilon}{1 - \varepsilon \delta / m}\,.
    \end{equation}
    
\item Now we return to Equation~\ref{eq:prop1}. Letting $\lambda_j \equiv \lambda_j(Y)$, we analyse the terms on the left-hand side:
    \begin{gather*}
    \frac{\tr(Y - l'I)^{-2}}{\Phi_{l'}(Y) - \Phi_l(Y)} = \frac{\sum_{j = 1}^m (\lambda_j - l')^{-2}}{\delta\sum_{j = 1}^m (\lambda_j - l')^{-1}(\lambda_j - l)^{-1}} = \frac{1}{\delta} + \frac{\sum_{j = 1}^m (\lambda_j - l')^{-2}(\lambda_j - l)^{-1}}{\delta\sum_{j = 1}^m (\lambda_j - l')^{-1}(\lambda_j - l)^{-1}}\,, \eqspace
    \Phi_{l'}(Y) = \varepsilon + \delta\sum_{j = 1}^m (\lambda_j - l')^{-1}(\lambda_j - l)^{-1} \overset{(*)}{\leq} \varepsilon + \varepsilon\delta\frac{\sum_{j = 1}^m (\lambda_j - l')^{-2}(\lambda_j - l)^{-1}}{\sum_{j = 1}^m (\lambda_j - l')^{-1}(\lambda_j - l)^{-1}}\,,
    \end{gather*}
    $(*)$ follows from Cauchy–Schwarz inequality for $a_j = (\lambda_j -l')^{-1},b_j = (\lambda_j - l)^{-1}$ in the form $(\sum_{j=1}^m a_j b_j)^2 \leq (\sum_{j=1}^m a_j^2 b_j ) (\sum_{j=1}^m b_j)$. Thus for the whole expression:
    \begin{align*}
    \frac{\tr(Y - l'I)^{-2}}{\Phi_{l'}(Y) - \Phi_l(Y)} - \Phi_{l'}(Y) &\geq 
    \frac{1}{\delta} - \varepsilon + (1 - \varepsilon \delta)\frac{\sum_{j = 1}^m (\lambda_j - l')^{-1} \cdot (\lambda_j - l')^{-1}(\lambda_j - l)^{-1}}{\sum_{j = 1}^m (\lambda_j - l')^{-1}(\lambda_j - l)^{-1}}\\
    &\overset{(a)}{\geq} \left( \frac{1}{\delta} - \varepsilon \right) \left( 1 + \frac{\delta \Phi_{l'}(Y)}{m} \right) \overset{(b)}{\geq} \frac{1/\delta - \varepsilon}{1 - \varepsilon\delta/m}\,,
    \end{align*}
    $(a)$ follows from applying Chebyshev's sum inequality to numerator, $(b)$ follows from using inequality~\eqref{eqn:phi_l_new_bound}.\qedhere
    \end{enumerate}
\end{proof}

The inequality~\eqref{eqn:delta_inequality} from Lemma~\ref{lem:delta_inequality} is quadratic in $\delta$. To characterize its solution set, we must prove that the corresponding equation has real roots and determine their relation to the critical value $\varepsilon^{-1}$. The following proposition addresses these points.

\begin{Proposition}\label{stt:properties_of_delta_equation}
Let $m, n, i\in \mathbb{N}$, $i < n$, and $\varepsilon \in \mathcal{I}_m$, $ l \in [-m/\varepsilon, 1- m/\varepsilon]$, where
\begin{equation}\label{eqn:I_m}
    \mathcal{I}_m = \begin{cases}
    (0, 1) & \text{if }\,m=1,\\
    (0, \infty) & \text{otherwise.}
    \end{cases}
\end{equation}
Then, the following quadratic equation
\begin{equation}\label{eqn:delta_equality}
\frac{1 - l - m/\varepsilon}{n - i}(1 - \varepsilon \delta) = \delta\left(1 - \frac{\varepsilon \delta}{m}\right)
\end{equation} 
has two real, non-negative roots $\delta^*$ and $\delta^{**}$ satisfying $\delta^* < \varepsilon^{-1} \leq \delta^{**}$ and $1 - (l + \delta^* + m/\varepsilon) \geq 0$.
\end{Proposition}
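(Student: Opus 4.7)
The plan is to rewrite equation~\eqref{eqn:delta_equality} as a standard quadratic and then verify each assertion via Vieta's formulas plus targeted evaluations. Setting $A = (1 - l - m/\varepsilon)/(n-i)$, which is non-negative by the hypothesis $l \leq 1 - m/\varepsilon$, the equation becomes
\[
f(\delta) \equiv \frac{\varepsilon}{m}\delta^2 - (1+A\varepsilon)\delta + A = 0\,.
\]

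I would first confirm that the discriminant $(1+A\varepsilon)^2 - 4A\varepsilon/m$ is non-negative, splitting into $m \geq 2$, where expansion gives $1 + 2A\varepsilon(1 - 2/m) + A^2\varepsilon^2 \geq 1$, and $m = 1$, where it collapses to the perfect square $(1 - A\varepsilon)^2$. Vieta's formulas then yield sum $= m(1+A\varepsilon)/\varepsilon > 0$ and product $= Am/\varepsilon \geq 0$, which together guarantee that both roots are real and non-negative.

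To locate the roots relative to $\varepsilon^{-1}$, I would compute directly $f(\varepsilon^{-1}) = (1/\varepsilon)(1/m - 1) \leq 0$. For $m \geq 2$ the inequality is strict, placing $\varepsilon^{-1}$ strictly between the two roots. For $m = 1$ the value vanishes, so $\varepsilon^{-1}$ is itself one of the roots; the definition of $\mathcal{I}_m$ forces $\varepsilon < 1$, and $l \geq -m/\varepsilon$ gives $A \leq 1$, so Vieta identifies $\delta^* = A \leq 1 < 1/\varepsilon = \delta^{**}$.

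The remaining inequality $1 - l - m/\varepsilon - \delta^* = A(n-i) - \delta^* \geq 0$ I would attack using the rationalised expression for the smaller root,
\[
\delta^* = \frac{2A}{(1+A\varepsilon) + \sqrt{(1+A\varepsilon)^2 - 4A\varepsilon/m}}\,,
\]
which reduces the claim (assuming $A > 0$; the case $A = 0$ is trivial) to $(1+A\varepsilon) + \sqrt{(1+A\varepsilon)^2 - 4A\varepsilon/m} \geq 2/(n-i)$. For $n - i \geq 2$ the left-hand side is at least $1 \geq 2/(n-i)$, and we are done. The genuinely delicate situation is $n - i = 1$, where the target becomes $\sqrt{(1+A\varepsilon)^2 - 4A\varepsilon/m} \geq 1 - A\varepsilon$: this is trivial when $A\varepsilon \geq 1$, and otherwise squaring reduces it to $4A\varepsilon \geq 4A\varepsilon/m$, which holds since $m \geq 1$. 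I expect this $n - i = 1$ corner to be the main obstacle, as it is the one sub-case where the desired bound on $\delta^*$ is nearly tight and where the crude estimate $\delta^* \leq 2A$ does not immediately suffice; all other pieces are routine consequences of Vieta and the discriminant computation.
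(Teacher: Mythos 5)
Your proof is correct, and every step checks out: the standard-form quadratic $\tfrac{\varepsilon}{m}\delta^2-(1+A\varepsilon)\delta+A=0$ is the right normalization, the discriminant and Vieta computations are accurate, the evaluation $f(\varepsilon^{-1})=\varepsilon^{-1}(1/m-1)\leq 0$ correctly localizes $\varepsilon^{-1}$ between the roots (with the $m=1$ degenerate case handled separately and correctly via the factorization into roots $A$ and $\varepsilon^{-1}$), and the rationalized formula $\delta^*=2A/\bigl((1+A\varepsilon)+\sqrt{D}\bigr)$ together with the $n-i=1$ corner case gives the last inequality. However, your route differs from the paper's. The paper argues geometrically: it views \eqref{eqn:delta_equality} as the intersection of a non-increasing line $f(\delta)=A(1-\varepsilon\delta)$ with a concave parabola $g(\delta)=\delta(1-\varepsilon\delta/m)$, and reads off existence, non-negativity, and the position relative to $\varepsilon^{-1}$ from the sign pattern $f(0)\geq g(0)$, $f(\varepsilon^{-1})\leq g(\varepsilon^{-1})$ — no discriminant computation is needed. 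For the final inequality the paper rewrites the equation as $\Gamma_{m,\varepsilon}(\delta^*)=(1-l-m/\varepsilon)/(n-i)$ with $\Gamma_{m,\varepsilon}(\delta)=\delta(1-\varepsilon\delta/m)/(1-\varepsilon\delta)\geq\delta$, which yields $\delta^*\leq 1-l-m/\varepsilon$ in one line and, crucially, introduces the function $\Gamma_{m,\varepsilon}$ that is reused in Proposition~\ref{stt:linear_subchain} and Lemma~\ref{lem:subchain_less_epichain}. Your approach buys explicitness (a closed form for $\delta^*$ and a sharp view of where the bound is nearly tight, namely $n-i=1$), at the cost of the case analysis that the $\Gamma(\delta)\geq\delta$ observation sidesteps, and it does not set up the machinery the paper needs downstream.
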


\begin{proof}\begin{enumerate}[wide]
    \item Under the proposition's assumption that $l \leq 1 - m/\varepsilon$, we have $f(0) \geq g(0)$. Furthermore, at $\delta = \varepsilon^{-1}$, we have $f(\varepsilon^{-1}) = 0$ and $g(\varepsilon^{-1}) = \varepsilon^{-1}(1-1/m) \geq 0$, which implies $f(\varepsilon^{-1}) \leq g(\varepsilon^{-1})$. Since $f(\delta)$ is a line with negative slope and $g(\delta)$ is a concave parabola, Equation~\ref{eqn:delta_equality} has two real roots satisfying $0 \leq \delta^* \leq \varepsilon^{-1} \leq \delta^{**}$.

    \item To show that $\delta^* < \varepsilon^{-1}$, we consider cases $m = 1$ and $m > 1$ separately. First, let $m = 1$, then the roots of the Equation~\ref{eqn:delta_equality} are given by
    \[
    \delta^* = \frac{1 - l - m/\varepsilon}{n - i}\,,\ \delta^{**} = \varepsilon^{-1}\,,
    \]
    where $\delta^* \leq 1$ due to the restrictions on $l$ and $n - i$, while $\delta^{**} = \varepsilon^{-1} > 1$ due to the restrictions on $\varepsilon$. Now, let $m > 1$. Then $f(\varepsilon^{-1}) < g(\varepsilon^{-1})$ which implies $\delta^* < \varepsilon^{-1}$.

    \item To prove the final property, it is convenient to rewrite \eqref{eqn:delta_equality} by isolating $\delta$. We introduce the function $\Gamma$:
    \begin{equation}\label{eqn:gamma_def}
    \Gamma_{m, \varepsilon}(\delta) = \delta \frac{1 - \varepsilon \delta / m}{1 - \varepsilon \delta}\,,
    \end{equation}
    defined for $\delta \in [0, \varepsilon^{-1})$. It is easy to verify that $\Gamma_{m, \varepsilon}(\delta)$ and $\Gamma_{m, \varepsilon}(\delta) - \delta$ are non-decreasing functions and $\Gamma_{m, \varepsilon}(\delta) \geq \delta$. Since $\delta^*$ is the only root in right-open interval $[0, \varepsilon)$, it is always given by
    \begin{equation}\label{eqn:gamma_delta*}
    \Gamma_{m, \varepsilon}(\delta^*) = \frac{1 - l - m/\varepsilon}{n - i}\,,
    \end{equation}
    which implies $\delta^* \leq 1 - l - m/\varepsilon$, since $n - i \geq 1$.\qedhere
\end{enumerate} 
\end{proof}

These results allow us to formalize one iteration of the greedy selection algorithm. The procedure is detailed in Algorithm~\ref{alg:one_greedy_iteration}. Note that we intentionally leave the choice of the new barrier $l_{i + 1}$, in State~\ref{state:updating_l} flexible for now. We will return in Subsections~\ref{sec:final_l_estimation} and~\ref{sec:l_epsilon_heuristic} to define a specific update rule optimized for our theoretical and practical goals.

\begin{algorithm}[htbp]
\caption{Template for one step of the greedy selection algorithm.}\label{alg:one_greedy_iteration}
\begin{algorithmic}[1]
\Require \parbox[t]{\dimexpr\linewidth-\algorithmicindent}{
$X \in \R^{m \times n}$ ($XX^T = I$), set $\cS \subseteq \overline{1, n}$ with cardinality $i < n$, $Y_i = X_\cS X_\cS^T$, \newline and barrier value $l_i < \lambda_m(Y_i)$ such that $\Phi_{l_i}(Y_i) = \varepsilon_i \in \mathcal{I}_m$.}
\Ensure Updated $\cS$, $Y_{i + 1}$, and new values of the barrier and potential: $l_{i + 1}$, $\varepsilon_{i + 1}$.

\State\label{state:calc_delta} $\displaystyle \delta_i \gets$ smaller root of~\eqref{eqn:delta_equality} with $l = l_i$ and $\varepsilon = \varepsilon_i$.
\State\label{state:choose_s} \parbox[t]{\dimexpr\linewidth-\algorithmicindent}{%
For each $j \in \cR = \overline{1, n} \setminus \cS$, compute $\Phi_{l_i + \delta_i}(Y_i + x_jx_j^T)$ using~\eqref{eqn:Phi_recalculation} and choose the index~$s$ that minimizes the potential:
}
\[
s \gets \argmin_{j \in \cR}\ \Phi_{l_i + \delta_i}(Y_i + x_j x_j^T)\,.
\]
\State\label{state:update_vars} $\cS \gets \cS \cup \{ s \}$, $Y_{i + 1} \gets Y_i + x_sx_s^T$.
\State\label{state:updating_l} Choose any $l_{i + 1} < \lambda_m(Y_{i + 1})$ such that $\Phi_{l_{i + 1}}(Y_{i + 1}) = \varepsilon_{i + 1} \in \mathcal{I}_{m}$.
\State \textbf{return} $\cS$, $Y_{i + 1}$, $l_{i + 1}$, $\varepsilon_{i + 1}$
\end{algorithmic}
\end{algorithm}

Note that this algorithm is well-defined and allows for consecutive execution as long as $i < k$. By Proposition~\ref{stt:properties_of_delta_equation}, $\delta_i$ exists and satisfies the conditions of Lemma~\ref{lem:delta_inequality}. Thus, calculating $\Phi_{l_i + \delta_i}(Y_i + x_jx_j^T)$ for arbitrary $j \in \cR$ is permissible in State~\ref{state:choose_s}. The input requirements of the next iteration are then ensured by State~\ref{state:updating_l}.

\subsection{A principled update strategy for the barrier}\label{sec:final_l_estimation}

This subsection specifies a principled method for choosing $l_{i + 1}$ and $\varepsilon_{i + 1}$ in State~\ref{state:updating_l} of Algorithm~\ref{alg:one_greedy_iteration} in order to derive the tightest possible theoretical bounds for our framework. This strategy is outlined in Algorithm~\ref{alg:l_epsilon_theory}.

\begin{algorithm}[htbp]
\caption{A principled strategy for updating $l$ and $\varepsilon$.}\label{alg:l_epsilon_theory}
\begin{algorithmic}[1]
\Require \parbox[t]{\dimexpr\linewidth-\algorithmicindent}{
Matrix $Y_{i + 1} \in \R^{m \times m}$, old values of the barrier and potential: $l_i$, $\varepsilon_i$.}
\Ensure New value of barrier and potential: $l_{i + 1}$, $\varepsilon_{i + 1}$.
\State Compute $l_{i + 1}$ by solving $\Phi_{l}(Y_{i + 1}) = \varepsilon_i$ on $[l_i, \lambda_m(Y_{i + 1}))$ using bisection method.
\State $\varepsilon_{i + 1} \gets \varepsilon_i$.
\State \textbf{return} $l_{i + 1}$, $\varepsilon_{i + 1}$.
\end{algorithmic}
\end{algorithm}

This algorithm is well defined: $\Phi_l(Y_{i + 1})$ is a monotonously increasing function of $l$, and because of properties of barrier function, $\Phi_{l_{i}}(Y_{i + 1}) \leq \varepsilon_i$. Thus, using bisection method is permissible. Additionally, $\varepsilon_{i + 1} = \varepsilon_{i} \in \mathcal{I}_m$. Since $\varepsilon$ stays constant in this approach, we will discard the index in this subsection.

Now we focus on deriving guarantees on $l_k$ and resulting norm of the pseudoinverse $\norms{X_\cS^\dag}$, obtained after running Algorithm~\ref{alg:one_greedy_iteration} with State~\ref{state:updating_l} determined by Algorithm~\ref{alg:l_epsilon_theory} (to which we will refer as Algorithm~\ref{alg:one_greedy_iteration}-\ref{alg:l_epsilon_theory}). To this end, we examine two types of finite sequences with increments related to the smaller root of~\eqref{eqn:delta_equality}. We refer to these two types of sequences as epichains and subchains, respectively.

\begin{definition}\label{def:epichain}
    A finite sequence $\{a_i\}_{i=j}^k$ is called an \textit{epichain} for positive integers $j,k,m,n$, where $m \leq k \leq n$, and $\varepsilon \in \mathcal{I}_m$ if:
    \begin{enumerate}
        \item $a_i \leq 1 - m/\varepsilon$ for all $i \in \overline{j, k}$.
        \item $a_{i + 1} - a_i \geq \delta_i$ for all $i \in \overline{j, k-1}$, where $\delta_i$ is the smaller root of \eqref{eqn:delta_equality} with $l=a_i$.
    \end{enumerate}
\end{definition}

\begin{Proposition}\label{stt:l_is_epichain}
    The sequence of the barrier values $\{l_i\}_{i=j}^k$, obtained by running Algorithm~\ref{alg:one_greedy_iteration}-\ref{alg:l_epsilon_theory} for $k - j$ iterations, is an epichain for the given $l,k,m,n$ and the chosen $\varepsilon \in \mathcal{I}_m$. 
\end{Proposition}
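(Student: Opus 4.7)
The plan is to prove Proposition~\ref{stt:l_is_epichain} by induction on $i$ from $i=j$ to $i=k$, verifying at each step both epichain conditions: the upper bound $l_i \leq 1 - m/\varepsilon$ and (for $i<k$) the increment bound $l_{i+1} - l_i \geq \delta_i$.

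The increment bound is the easier half. Assuming $l_i \leq 1 - m/\varepsilon$ inductively, Proposition~\ref{stt:properties_of_delta_equation} produces a valid smaller root $\delta_i$, so Lemma~\ref{lem:delta_inequality} guarantees the existence of some candidate $x_j$ with $\Phi_{l_i + \delta_i}(Y_i + x_j x_j^T) \leq \varepsilon$. State~\ref{state:choose_s} of Algorithm~\ref{alg:one_greedy_iteration} selects the argmin over $\cR$, so in particular $\Phi_{l_i + \delta_i}(Y_{i+1}) \leq \varepsilon = \Phi_{l_{i+1}}(Y_{i+1})$, where the last equality is the definition of $l_{i+1}$ in Algorithm~\ref{alg:l_epsilon_theory}. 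Strict monotonicity of $\Phi_l(Y_{i+1})$ in $l$ then yields $l_{i+1} \geq l_i + \delta_i$.

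The main obstacle is maintaining the upper bound $l_i \leq 1 - m/\varepsilon$ throughout. My approach is to apply the AM--HM inequality to the positive numbers $\{\lambda_q(Y_i) - l_i\}_{q=1}^m$, giving
\[
\varepsilon \cdot (\tr Y_i - m l_i) = \Phi_{l_i}(Y_i)\sum_{q=1}^m \bigl(\lambda_q(Y_i) - l_i\bigr) \geq m^2,
\]
so $l_i \leq \tr Y_i / m - m/\varepsilon$. The remaining ingredient is the uniform trace bound $\tr Y_i \leq m$, which crucially uses the orthonormality assumption: writing $Y_i = \sum_{q \in \cS} x_q x_q^T$ and noting that $\sum_{q=1}^n x_q x_q^T = X X^T = I$, one has $\tr Y_i = \sum_{q \in \cS} \|x_q\|^2 \leq \sum_{q=1}^n \|x_q\|^2 = \tr I = m$. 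Combining the two estimates yields $l_i \leq 1 - m/\varepsilon$ for every $i \in \overline{j, k}$, which handles both the base case (from the input assumption $\Phi_{l_j}(Y_j) = \varepsilon$) and the inductive step (from the invariant $\Phi_{l_{i+1}}(Y_{i+1}) = \varepsilon$ enforced by Algorithm~\ref{alg:l_epsilon_theory}), closing the induction.
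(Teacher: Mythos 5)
Your proof is correct and takes essentially the same route as the paper's: Condition~2 via Lemma~\ref{lem:delta_inequality}, the argmin in State~\ref{state:choose_s}, and monotonicity of $\Phi_l(Y_{i+1})$ in $l$; Condition~1 as a one-line consequence of orthonormality. The only cosmetic difference is that the paper gets $\varepsilon \geq m/(1-l_i)$ directly from $\lambda_j(Y_i) \leq 1$ (since $Y_i \preceq XX^T = I$), whereas you reach the same inequality through AM--HM combined with the trace bound $\tr Y_i \leq m$.
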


\begin{proof}
    Condition 1 follows from the fact that in Algorithm~\ref{alg:one_greedy_iteration}
    \[
    \varepsilon = \Phi_{l_i}(Y_i) = \sum_{j=1}^m \frac{1}{\lambda_j(Y_i) - l_i} \geq \frac{m}{1 - l_i}  \implies l_i \leq 1 - \frac{m}{\varepsilon}\,.
    \] 
    Condition 2 follows from the fact that $\Phi_{l_i + \delta_i}(Y_i) \leq \varepsilon$, which, combined with monotonicity of $\Phi_l(Y_i)$, yields $l_{i + 1} \geq l_i + \delta_i$.\qedhere
\end{proof}

\begin{definition}\label{def:subchain}
    A finite sequence $\{a_i\}_{i=j}^k$ is called a \textit{subchain} for positive integers $j,k,m,n$, where $m \leq k \leq n$, and $\varepsilon \in \mathcal{I}_m$ if $0 \leq a_{i+1} - a_i \leq \delta_i$ for all $i \in \overline{0, k-1}$, where $\delta_i$ is the smaller root of \eqref{eqn:delta_equality} with $l=a_i$.
\end{definition}

Note that Definition~\ref{def:epichain} and Definition~\ref{def:subchain} are consistent: by Proposition~\ref{stt:properties_of_delta_equation},~\eqref{eqn:delta_equality}~has real non-negative roots, and $1 - (a_i + \delta_i + m/\varepsilon) \geq 0$, which in case of subchain guarantees that for all~$i$, $a_i \leq 1 - m/\varepsilon$. Therefore, this condition is present in the definition of an epichain and omitted in the definition of a subchain.

\begin{Proposition}\label{stt:linear_subchain}
    For any $j,k,m,n$ and $\varepsilon$ satisfying conditions in a definition, the linear sequence $\{d_i\}_{i=j}^k$ defined by $d_i = d_j + i\delta_j$, where $\delta_j$ is the smaller root of \eqref{eqn:delta_equality} with $l = d_j$ and $i=j$, is a subchain.
\end{Proposition}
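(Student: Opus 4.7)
The plan is to rephrase the subchain condition in terms of the auxiliary function $\Gamma_{m,\varepsilon}$ from~\eqref{eqn:gamma_def}, which was the lever used in the proof of Proposition~\ref{stt:properties_of_delta_equation}. Writing $\tilde{\delta}_i$ for the smaller root of~\eqref{eqn:delta_equality} at $l=d_i$ and index~$i$, relation~\eqref{eqn:gamma_delta*} gives
\[
\Gamma_{m,\varepsilon}(\tilde{\delta}_i) \;=\; \frac{1 - d_i - m/\varepsilon}{n - i}, \qquad \Gamma_{m,\varepsilon}(\delta_j) \;=\; \frac{1 - d_j - m/\varepsilon}{n - j}.
\]
Because the common increment of the linear sequence is exactly $\delta_j$ and $\Gamma_{m,\varepsilon}$ is non-decreasing on its domain (a property recorded inside the proof of Proposition~\ref{stt:properties_of_delta_equation}), the required inequality $d_{i+1}-d_i = \delta_j \leq \tilde{\delta}_i$ is equivalent to the monotonicity of ratios
\[
\frac{1 - d_i - m/\varepsilon}{n - i} \;\geq\; \frac{1 - d_j - m/\varepsilon}{n - j}.
\]

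The first step is to substitute $d_i - d_j = (i-j)\delta_j$ into the left-hand side and cross-multiply. A one-line computation collapses the inequality to $1 - d_j - m/\varepsilon \geq (n-j)\,\delta_j$, which is precisely the floor property $\Gamma_{m,\varepsilon}(\delta_j)\geq \delta_j$, also noted in the proof of Proposition~\ref{stt:properties_of_delta_equation}. The non-negativity half of the subchain condition, $d_{i+1}-d_i = \delta_j \geq 0$, is likewise immediate from Proposition~\ref{stt:properties_of_delta_equation}.

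The only subtlety I anticipate is admissibility: $\tilde{\delta}_i$ is only well-defined while $d_i \leq 1 - m/\varepsilon$, i.e., while the hypotheses of Proposition~\ref{stt:properties_of_delta_equation} continue to hold along the sequence. I would close this by a short induction on~$i$, using at each step the bound $d_i + \tilde{\delta}_i \leq 1 - m/\varepsilon$ guaranteed by Proposition~\ref{stt:properties_of_delta_equation} together with the increment estimate $\delta_j \leq \tilde{\delta}_i$ just established, so that $d_{i+1} = d_i + \delta_j \leq 1 - m/\varepsilon$. Apart from this bookkeeping, no real obstacle is expected; the entire argument is driven by the two properties of $\Gamma_{m,\varepsilon}$ — monotonicity and $\Gamma_{m,\varepsilon}(\delta)\geq\delta$ — inherited from Proposition~\ref{stt:properties_of_delta_equation}.
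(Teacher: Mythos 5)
Your proposal is correct and follows essentially the same route as the paper: both rewrite the subchain condition via $\Gamma_{m,\varepsilon}$ and \eqref{eqn:gamma_delta*}, reduce it by the substitution $d_i - d_j = (i-j)\delta_j$ to the floor property $\Gamma_{m,\varepsilon}(\delta_j)\geq\delta_j$, and conclude $\delta_j\leq\delta_i$ from the monotonicity of $\Gamma_{m,\varepsilon}$. Your admissibility induction is also sound; the paper disposes of that point in the remark following Definition~\ref{def:subchain} using the same bound $1-(a_i+\delta_i+m/\varepsilon)\geq 0$ from Proposition~\ref{stt:properties_of_delta_equation}.
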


\begin{proof}
    We utilize the function $\Gamma_{m,\varepsilon}(\delta)$ introduced in~\eqref{eqn:gamma_def}. Transforming the right-hand side of~\eqref{eqn:gamma_delta*}, we have
    \[
    \Gamma_{m,\varepsilon}(\delta_i) = \frac{1 - l_j - m/\varepsilon - (i - j)\delta_j}{n - i} = \frac{n - j}{n - i}\Gamma_{m,\varepsilon}(\delta_j) - \frac{i - j}{n - i}\delta_j \overset{(*)}{\geq} \Gamma_{m,\varepsilon}(\delta_j)\,,
    \]
    $(*)$ follows from the fact that $\Gamma_{m,\varepsilon}(\delta) \geq \delta$. Since $\Gamma_{m, \varepsilon}(\delta)$ is a non-decreasing function, $\delta_j \leq \delta_i$, and $d_{i+1} - d_i \leq \delta_i$, which proves that $\{ d_i \}_{i = j}^k$ is a subchain.
\end{proof}

Lemma~\ref{lem:subchain_less_epichain} studies the relation between the set of epichains and the set of subchains which start with the same element in terms of element-wise sequence comparison (e.g., $\{a_i\}_{i=j}^k \geq \{ b_i \}_{i=j}^k$ means that $a_i \geq b_i$ for all $i$ in $\overline{0, k}$).

\begin{lemma}\label{lem:subchain_less_epichain}
    Let $\{l_i\}_{i=j}^k$ and $\{d_i\}_{i=j}^k$, where $l_j = d_j$, be an epichain and a subchain, respectively, for the same parameters $j,k,m,n, \varepsilon$. Then $\{ l_i \}_{i=j}^k \geq \{ d_i \}_{i=j}^k$.
\end{lemma}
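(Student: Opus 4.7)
The plan is to prove the element-wise inequality by induction on $i$ from $j$ to $k$. The base case $l_j = d_j$ is given. For the inductive step, assume $l_i \geq d_i$; the epichain condition gives $l_{i+1} \geq l_i + \delta^*(l_i)$ and the subchain condition gives $d_{i+1} \leq d_i + \delta^*(d_i)$, where $\delta^*(a)$ denotes the smaller root of~\eqref{eqn:delta_equality} with $l$ replaced by $a$ at the common index $i$. It therefore suffices to prove the monotonicity claim that, at fixed $i$, the map $\phi(l) := l + \delta^*(l)$ is non-decreasing in $l$ on the admissible interval $[-m/\varepsilon,\, 1 - m/\varepsilon]$.

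To establish this monotonicity, I will use the reformulation $\Gamma_{m,\varepsilon}(\delta^*(l)) = (1 - l - m/\varepsilon)/(n - i)$ from~\eqref{eqn:gamma_delta*}, combined with the two properties of $\Gamma_{m,\varepsilon}$ already verified in the proof of Proposition~\ref{stt:properties_of_delta_equation}: both $\Gamma_{m,\varepsilon}$ and $\Gamma_{m,\varepsilon}(\delta) - \delta$ are non-decreasing on $[0, \varepsilon^{-1})$. Writing $\delta_l = \delta^*(l_i)$ and $\delta_d = \delta^*(d_i)$, the reformulation gives $\Gamma_{m,\varepsilon}(\delta_d) - \Gamma_{m,\varepsilon}(\delta_l) = (l_i - d_i)/(n - i) \geq 0$, so $\delta_d \geq \delta_l$ by the first property. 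The second property applied at $\delta_l \leq \delta_d$ then yields $\Gamma_{m,\varepsilon}(\delta_d) - \Gamma_{m,\varepsilon}(\delta_l) \geq \delta_d - \delta_l$. Combining these and using $n - i \geq 1$ (which holds since $i \leq k - 1 \leq n - 1$) produces $\delta_d - \delta_l \leq l_i - d_i$, equivalently $\phi(l_i) \geq \phi(d_i)$. Chaining, $l_{i+1} \geq \phi(l_i) \geq \phi(d_i) \geq d_{i+1}$, which closes the induction.

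The main obstacle is identifying the correct intermediate object, namely $\phi(l) = l + \delta^*(l)$, and reparametrizing through $\Gamma_{m,\varepsilon}$ so that the two monotonicity properties of $\Gamma_{m,\varepsilon}$ can be chained cleanly; once that setup is in place the argument collapses to a short computation. A minor side point to verify is that $\delta^*(d_i)$ is well-defined at every step, but this follows from the consistency remark after Definition~\ref{def:subchain}, which guarantees $d_i \leq 1 - m/\varepsilon$ throughout, placing $d_i$ in the valid range of Proposition~\ref{stt:properties_of_delta_equation}.
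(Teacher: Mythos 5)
Your proof is correct and follows essentially the same route as the paper's: both arguments reduce to showing that the map $l \mapsto l + \delta^*(l)$ is monotone, using the reformulation $\Gamma_{m,\varepsilon}(\delta^*(l)) = (1 - l - m/\varepsilon)/(n-i)$ together with the non-decreasingness of $\Gamma_{m,\varepsilon}$ and of $\Gamma_{m,\varepsilon}(\delta) - \delta$. The only difference is presentational: you run a direct induction, whereas the paper argues by contradiction at the first index where the ordering would fail.
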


\begin{proof}
Assume $\{d_i\}_{i=j}^k \nleq \{l_i\}_{i=j}^k$. Since $d_j = l_j$, there must be a first index $i$ such that $d_{i+1} > l_{i+1}$, while $d_i \leq l_i$. From the definitions of the sequences, we know $d_{i+1} \leq d_i + \delta_i^d$ and $l_{i+1} \geq l_i + \delta_i^l$. Combining these with our assumption gives the inequality $d_i + \delta_i^d > l_i + \delta_i^l$.

Transforming~\eqref{eqn:gamma_delta*}, we obtain
\begin{align*}
1 - \left(d_i + \delta_i^d + m/\varepsilon\right) &= (n - i)\Gamma_{m,\varepsilon}\left(\delta_i^d\right) - \delta_i^d\,, \\
1 - \left(l_i + \delta_i^l + m/\varepsilon\right) &= (n - i)\Gamma_{m,\varepsilon}\left(\delta_i^l\right) - \delta_i^l\,.
\end{align*}
The condition $d_i + \delta_i^d > l_i + \delta_i^l$ is then equivalent to 
\[
(n - i)\Gamma_{m,\varepsilon}\left(\delta_i^d\right) - \delta_i^d < (n - i)\Gamma_{m,\varepsilon}\left(\delta_i^l\right) - \delta_i^l\,,
\]
which implies $\delta_i^d < \delta_i^l$, since $i < n$ and $\Gamma_{m,\varepsilon}(\delta) - \delta$ is a non-decreasing function. Then, $d_{i+1} \leq d_i + \delta_i^d < l_i + \delta_i^l \leq l_{i+1}$, which contradicts our assumption.
\end{proof}

Now we can establish a lower bound on the final value of the barrier, $l_k$, and consequently an upper bound on $\norms{X_\cS^\dag}^2$. This bound, in turn, allows us to determine an optimal value for $\varepsilon$.

\begin{Proposition}\label{stt:orthonormal_bounds}
     Fix $X \in \R^{m \times n}$ ($XX^T=I$). Let $\varepsilon_{opt}$ be defined as
    \begin{equation}\label{eqn:optimal_epsilon}
    \begin{cases}
        \displaystyle\varepsilon_{opt} = \text{arbitrary number in}\ (0, 1)\,, &\text{if}\ m = 1\,, \\
        \displaystyle\varepsilon_{opt} =  n\frac{ 2 \left(\alpha - 1\right)+m \left(k \left(\alpha+m-2\right)-2 \alpha-m+3\right)}{ (k-1)m(k-m+1)}\,, &\text{if}\ m > 1\,,\\
    \end{cases} 
    \end{equation}
     where $\alpha = \sqrt{(k-1)m+1}$. Then, after $k$ iterations of running Algorithm~\ref{alg:one_greedy_iteration}-\ref{alg:l_epsilon_theory} (starting with $\cS = \varnothing$) with $\varepsilon = \varepsilon_{opt}$, $\norms{X_\cS^\dag}^2$ satisfies
    \begin{equation}\label{eqn:pinv_norm_bound}
    \begin{cases}
        \displaystyle \norms{X_\cS^\dag}^2 = \frac{1}{\lambda_m(Y_k)} \leq \frac{n}{k}\,, &\text{if}\ m = 1\,, \\
        \displaystyle \norms{X_\cS^\dag}^2 = \frac{1}{\lambda_m(Y_k)} \leq \frac{n}{m}\left(\frac{\alpha - 1}{\alpha - k}\right)^2\,, &\text{if}\ m > 1\,.\\
    \end{cases} 
    \end{equation}
\end{Proposition}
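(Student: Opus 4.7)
The plan is to combine the epichain/subchain domination from Subsection~\ref{sec:final_l_estimation} with an explicit first-step computation and then optimize over $\varepsilon$. Since $\cS = \varnothing$ yields $Y_0 = 0$ and $\Phi_l(0) = -m/l$, the requirement $\Phi_{l_0}(Y_0) = \varepsilon$ forces $l_0 = -m/\varepsilon$. Running Algorithm~\ref{alg:one_greedy_iteration}--\ref{alg:l_epsilon_theory} for $k$ steps with $\varepsilon \in \mathcal{I}_m$ held fixed then produces an epichain $\{l_i\}_{i=0}^{k}$ (Proposition~\ref{stt:l_is_epichain}). Letting $\delta_0$ denote the smaller root of~\eqref{eqn:delta_equality} at $l = -m/\varepsilon$ and $i = 0$, Proposition~\ref{stt:linear_subchain} shows that $d_i = -m/\varepsilon + i\delta_0$ is a subchain starting at $l_0$, so Lemma~\ref{lem:subchain_less_epichain} gives $l_k \geq -m/\varepsilon + k\delta_0$. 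Combining this with~\eqref{eqn:l_bound} and $\Phi_{l_k}(Y_k) = \varepsilon$ yields $\lambda_m(Y_k) \geq l_k + 1/\varepsilon \geq k\delta_0 - (m-1)/\varepsilon$, reducing the proof to maximizing this quantity over $\varepsilon$.

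Substituting $l = -m/\varepsilon$ and $i = 0$ into~\eqref{eqn:delta_equality} yields the quadratic $n\varepsilon\delta^2 - m(n+\varepsilon)\delta + m = 0$. For $m = 1$ the term $(m-1)/\varepsilon$ drops out and the equation factors as $(n\delta - 1)(1 - \varepsilon\delta) = 0$; the smaller root is $\delta_0 = 1/n$ for every admissible $\varepsilon \in (0, 1)$, and therefore $\lambda_1(Y_k) \geq k/n$, i.e.\ $\norms{X_\cS^\dag}^2 \leq n/k$.

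For $m > 1$ I would introduce the dimensionless variable $u = \varepsilon\delta_0 \in (0, 1)$; a short manipulation of the quadratic produces the rational parametrization $\varepsilon = nu(m-u)/[m(1-u)]$ and
\[
k\delta_0 - \frac{m-1}{\varepsilon} \;=\; \frac{ku - m + 1}{\varepsilon} \;=\; \frac{m\,(1-u)(ku - m + 1)}{n\,u\,(m - u)}\,.
\]
Differentiating this rational function in $u$ and cancelling the factor $m-1$ reduces the critical equation to $(k-1)u^2 + 2u - m = 0$, whose unique positive root is $u_{opt} = (\alpha - 1)/(k-1)$; the closed form for $\varepsilon_{opt}$ in~\eqref{eqn:optimal_epsilon} is then obtained by substituting $u_{opt}$ back into the parametrization of $\varepsilon$ and simplifying. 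The main algebraic obstacle is verifying that this choice delivers the stated bound~\eqref{eqn:pinv_norm_bound}. The computation is streamlined by the identity $m - u_{opt} = \alpha\, u_{opt}$, which is equivalent to $\alpha^2 = (k-1)m + 1$; combined with $1 - u_{opt} = (k - \alpha)/(k-1)$ and $ku_{opt} - m + 1 = \alpha(k - \alpha)/(k-1)$, it collapses the objective at $u_{opt}$ to $m(k - \alpha)^2 / [n(\alpha - 1)^2]$, whence $\norms{X_\cS^\dag}^2 \leq (n/m)\bigl((\alpha - 1)/(\alpha - k)\bigr)^2$. A brief sign check using the boundary values $u = (m-1)/k$ and $u = 1$ (at both of which the objective vanishes) together with $u_{opt} \in ((m-1)/k, 1)$ under $k \geq m$ confirms that $u_{opt}$ is indeed a maximum on the admissible interval.
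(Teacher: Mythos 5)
Your proposal follows the paper's own route exactly: form the epichain of barrier values, dominate it from below by the linear subchain $d_i = -m/\varepsilon + i\delta_0$, reparametrize via $u = \varepsilon\delta_0$ so that $\delta_0 = \frac{1}{n}\cdot\frac{1-u}{1-u/m}$, and maximize $k\delta_0 - (m-1)/\varepsilon$ over $u$. The only difference is that you carry out the optimization explicitly (critical equation $(k-1)u^2 + 2u - m = 0$, root $u_{opt} = (\alpha-1)/(k-1)$, and the simplifying identities), which the paper leaves implicit; your computations check out and reproduce both $\varepsilon_{opt}$ and the stated bound.
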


\begin{proof}
    Let us fix an arbitrary $\varepsilon \in \mathcal{I}_m$. The sequence of barrier values $\{l_i\}_{i=0}^k$ generated by Algorithm~\ref{alg:one_greedy_iteration}-\ref{alg:l_epsilon_theory} is an epichain, according to Proposition~\ref{stt:l_is_epichain}. Lemma~\ref{lem:subchain_less_epichain} establishes that this epichain is bounded below by any subchain that starts with the same initial value. Setting $d_0 = l_0$ in Proposition~\ref{stt:linear_subchain} provides exactly such a sequence: the linear subchain $\{d_i\}_{i=0}^k$. We can therefore state that $l_k \geq d_k$.
    
    The potential function property guarantees that after $k$ columns are selected, $\lambda_m(Y_k) \geq l_k + 1/\varepsilon$. Using the linear subchain bound, we have:
    \begin{equation}\label{eq:lambdam_cond}
    \lambda_m(Y_k) \geq d_k + \frac{1}{\varepsilon} = -\frac{m - 1}{\varepsilon} + k\delta_0 = \delta_0 \left( -\frac{m - 1}{\varepsilon \delta_0} + k \right)\,,
    \end{equation}
    where $\delta_0$ is the smaller root of the Equation~\ref{eqn:delta_equality} with $l = l_0 = -m/\varepsilon$ and $i = 0$. 

    To maximize this lower bound, we re-parametrize it. Let $\varepsilon' = \varepsilon \delta_0$. Equations~\ref{eqn:gamma_def} and~\ref{eqn:gamma_delta*} allow us to express $\delta_0$ as a function of $\varepsilon'$:
    \[
        \delta_0 = \frac{1}{n}\left(\frac{1 - \varepsilon'}{1 - \varepsilon' / m}\right)\,.
    \]
    Substituting it into~\eqref{eq:lambdam_cond} and maximizing over $\varepsilon'$ yields~\eqref{eqn:optimal_epsilon} and lower bound on $\lambda_m(Y_k)$, which is equivalent to upper bound in~\eqref{eqn:pinv_norm_bound}.
\end{proof}

\subsection{Heuristic approach for updating the barrier}\label{sec:l_epsilon_heuristic}

While the update strategy presented in Subsection~\ref{sec:final_l_estimation} is theoretically sound and sufficient to prove our main bound, its practical performance can be enhanced. The guaranteed barrier advancement, $\delta_i$, is a worst-case lower bound. In practice, after adding a column, it is often possible to advance the barrier much further than $\delta_i$, while keeping the potential on the same level. This creates a \enquote{performance surplus}, affording us the freedom to select the next state $(l_{i+1}, \varepsilon_{i+1})$ more strategically without compromising the bounds.

Our heuristic uses this surplus to adaptively control the algorithm's greediness by adjusting the position of the barrier $l$. A distant $l$ encourages a conservative selection that considers the global eigenvalue structure, which is ideal for early stages. A closer $l$ makes the selection aggressively prioritize the smallest eigenvalue, which is preferable in the final stages.

To implement the heuristic strategy safely, we introduce a \enquote{lookahead} function that estimates the guaranteed final performance from any intermediate state. Consider the state at the end of the iteration $i$ of running Algorithm~\ref{alg:one_greedy_iteration}. At this point, $i+1$ columns have been selected. We define the $B_{i+1}(l)$ as:
\begin{equation}\label{eqn:B_definition}
B_{i+1}(l) = l + (k - i - 1)\delta(l, \varepsilon) + \frac{1}{\varepsilon}\,,
\end{equation}
where $\varepsilon = \Phi_l(Y_{i+1})$, and $\delta(l, \varepsilon)$ is the smaller root of~\eqref{eqn:delta_equality} (with the iteration counter in that equation corresponding to the current state, i.e., $i+1$ columns selected). This function provides the guaranteed lower bound on the final value of $\lambda_m(Y_k)$ if, from this point forward, the algorithm were to proceed with the fixed-potential strategy from Subsection~\ref{sec:final_l_estimation}.

Our heuristic, presented in Algorithm~\ref{alg:l_epsilon_heuristic}, uses this function to guide its choices. The fundamental principle is to select a new state $(l_{i+1}, \varepsilon_{i+1})$ that maintains the initial performance guarantee, i.e., ensuring $B_{i+1}(l_{i+1}) \geq B_{0}$, where $B_{0} \equiv B_0(l_0)$ is the bound established at the start of the process. The algorithm identifies two candidate points: a conservative $l_{min}$ (the lowest barrier satisfying the guarantee) and an aggressive $l_{opt}$ (which maximizes $B_{i+1}(l)$). It then interpolates between these points, transitioning from the conservative to the aggressive choice as the selection progresses. A final safety check reverts to the theoretical method if the heuristic choice is found to be unsafe.

\begin{algorithm}[htbp]
\caption{Heuristic approach for updating $l$ and $\varepsilon$.}\label{alg:l_epsilon_heuristic}
\begin{algorithmic}[1]
\Require \parbox[t]{\dimexpr\linewidth-\algorithmicindent}{
Iteration number $i$, sampling parameter $k$,\\
matrix $Y_{i + 1} \in \R^{m \times m}$ with $m > 1$\footnote{The heuristic is designed for the case $m>1$. For $m=1$, Algorithm~\ref{alg:one_greedy_iteration} always picks the largest remaining element, regardless of the $l$ and $\varepsilon$.}, $l_i$ and $\varepsilon_i$.}
\Ensure New values of barrier and potential: $l_{i + 1}$, $\varepsilon_{i + 1}$.
\State Let $B_0$ be the theoretical lower bound on $\lambda_m(Y_k)$ from~\eqref{eqn:pinv_norm_bound}.
\State Using golden-section search, find candidate maximizer $l_{opt}$ of $B_{i+1}(l)$ on $[l_{-}, \lambda_m(Y_{i + 1}))$, where $l_{-} = -(m + 1)/(m - 1)$.
\State Using a bisection method, find the candidate $l_{min}$ by solving $B_{i+1}(l) = B_0$ on $[l_{-},l_{opt}]$.
\If{$i + 1 < k - m$}  \Comment{Conservative phase.}
\State $l_{trial} \gets l_{min}$.
\Else \Comment{Aggressive phase.}
\State $\lambda \gets (k - i - 2)/m$.
\State $\displaystyle l_{trial} \gets \lambda l_{min} + (1 - \lambda)l_{opt}$.
\EndIf

\If{$B_{i + 1}(l_{trial}) \geq B_0$} \Comment{Safety check.}
\State $l_{i + 1} = l_{trial}$, $\varepsilon_{i + 1} \gets \Phi_{l_{i + 1}}$.
\Else \Comment{Fallback.}
\State Use Algorithm~\ref{alg:l_epsilon_theory} to obtain $l_{i + 1}$ and $\varepsilon_{i + 1}$.
\EndIf

\State \textbf{return} $l_{i + 1}$, $\varepsilon_{i + 1}$.
\end{algorithmic}
\end{algorithm}

The numerical search for $l_{opt}$ and $l_{min}$ is performed on a bounded interval. For the lower bound, we choose $l_{-} = -(m+1)/(m-1)$, which can be shown to guarantee $B_{i+1}(l_{-}) \leq 0$. The derivation for this bound is as follows:
\begin{gather}
    \delta(l, \varepsilon) \leq \Gamma_{m,\varepsilon}(\delta) = \frac{1 - l - m/\varepsilon}{n - i - 1} \overset{(*)}{\leq} \frac{1}{n - i - 1}\,,\\
    \varepsilon = \sum_{j=1}^m \frac{1}{\lambda_j(Y) - l}\geq \frac{m}{1 - l} \implies \frac{1}{\varepsilon} \leq \frac{1 - l}{m}\,,
\end{gather}
$(*)$ follows from $\varepsilon = \sum_{j=1}^m (\lambda_j(Y) - l)^{-1} \leq - m/l$. This leads to an inequality 
\[
B_{i + 1}(l_{-}) \leq l_- + \frac{k - i - 1}{n - i - 1} + \frac{1 - l_-}{m} \leq \frac{(m - 1)l_- + (m + 1)}{m} = 0\,.
\]

Finally, we note that the values $l_{opt}$ and $l_{min}$ are termed \enquote{candidates} because we do not formally prove properties such as unimodality for $B_{i+1}(l)$, though it was consistently well-behaved in our experiments.

\begin{Proposition}\label{stt:heuristic_bounds}
    Let $X \in \R^{m \times n}$ ($XX^T =I$, $m > 1$). Consider the greedy selection process using the heuristic state update from Algorithm~\ref{alg:l_epsilon_heuristic} (Algorithm~\ref{alg:one_greedy_iteration}-\ref{alg:l_epsilon_heuristic}). If the process is initialized with $\varepsilon_0 = \varepsilon_{opt}$ from \eqref{eqn:optimal_epsilon}, the resulting submatrix $X_\cS$ satisfies the bounds stated in Proposition~\ref{stt:orthonormal_bounds}.
\end{Proposition}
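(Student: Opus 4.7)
The plan is to prove, by induction on $i$, the invariant $B_{i+1}(l_{i+1}) \geq B_0$, where $B_0 \equiv B_0(l_0)$ is the lookahead value computed at the start of the algorithm. Since the construction of $\varepsilon_{opt}$ in the proof of Proposition~\ref{stt:orthonormal_bounds} was chosen precisely so that $B_0$ coincides with the bound in~\eqref{eqn:pinv_norm_bound} for $m > 1$, the conclusion follows at once from the final-step identity
\[
    \lambda_m(Y_k) \;\geq\; l_k + \frac{1}{\varepsilon_k} \;=\; B_k(l_k) \;\geq\; B_0,
\]
where the first inequality is the barrier bound~\eqref{eqn:l_bound} and the equality uses that $B_k$ involves no remaining iterations.

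The base case $B_0(l_0) \geq B_0$ is tautological. For the inductive step, assume $B_i(l_i) \geq B_0$ and examine the two branches of Algorithm~\ref{alg:l_epsilon_heuristic}. When the safety check passes, one has $l_{i+1} = l_{trial}$ and $\varepsilon_{i+1} = \Phi_{l_{i+1}}(Y_{i+1})$, so $B_{i+1}(l_{i+1}) \geq B_0$ holds directly by the check. In the fallback branch, Algorithm~\ref{alg:l_epsilon_theory} keeps $\varepsilon_{i+1} = \varepsilon_i$ and picks $l_{i+1}$ as the unique root of $\Phi_l(Y_{i+1}) = \varepsilon_i$; for this branch I would prove the stronger monotonicity statement $B_{i+1}(l_{i+1}) \geq B_i(l_i)$, which combined with the inductive hypothesis gives the invariant.

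The main obstacle is verifying this monotonicity across a fallback step. Writing $\delta_i$ for the advancement used in Algorithm~\ref{alg:one_greedy_iteration} at iteration $i$ and $\delta' = \delta(l_{i+1}, \varepsilon_i)$ for the corresponding root appearing inside $B_{i+1}$, Lemma~\ref{lem:delta_inequality} combined with the monotonicity of $\Phi_l$ in $l$ forces $l_{i+1} = l_i + \delta_i + \eta$ for some $\eta \geq 0$. A direct expansion yields
\[
    B_{i+1}(l_{i+1}) - B_i(l_i) \;=\; \eta + (k - i - 1)(\delta' - \delta_i),
\]
and applying the defining relation~\eqref{eqn:gamma_delta*} at the two states gives the identity $\eta = (\Gamma(\delta_i) - \delta_i) + (n - i - 1)(\Gamma(\delta_i) - \Gamma(\delta'))$. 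I would then finish by splitting on the sign of $\delta' - \delta_i$: if $\delta' \geq \delta_i$, both summands in the first displayed expression are non-negative; if $\delta' < \delta_i$, I would use that $\Gamma_{m,\varepsilon}(\delta) - \delta = (m-1)\varepsilon\delta^2 / [m(1 - \varepsilon\delta)]$ is non-decreasing on $[0, \varepsilon^{-1})$, hence $\Gamma(\delta_i) - \Gamma(\delta') \geq \delta_i - \delta'$; combined with $n \geq k$ this yields $\eta \geq (n - i - 1)(\delta_i - \delta') \geq (k - i - 1)(\delta_i - \delta')$, which is precisely the condition $\eta + (k - i - 1)(\delta' - \delta_i) \geq 0$. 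The hypothesis $m > 1$ rules out the degenerate case already handled by Proposition~\ref{stt:orthonormal_bounds}.
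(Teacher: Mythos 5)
Your proof is correct, but it follows a genuinely different route from the paper's. The paper's argument is global and very short: it lets $j-1$ be the \emph{last} iteration on which the heuristic branch fired, observes that from step $j$ onward the process coincides with the fixed-potential strategy of Algorithm~\ref{alg:l_epsilon_theory}, and then applies the already-established epichain/subchain machinery (Proposition~\ref{stt:l_is_epichain}, Proposition~\ref{stt:linear_subchain}, Lemma~\ref{lem:subchain_less_epichain}) to that final block to conclude $\lambda_m(Y_k) \geq B_j(l_j)$, after which the safety check supplies $B_j(l_j) \geq B_0$. You instead maintain the per-step invariant $B_i(l_i) \geq B_0$, which requires new content only in the fallback branch: there you prove the monotonicity $B_{i+1}(l_{i+1}) \geq B_i(l_i)$ by the identity $\eta = (\Gamma(\delta_i)-\delta_i) + (n-i-1)(\Gamma(\delta_i)-\Gamma(\delta'))$ together with the monotonicity of $\Gamma_{m,\varepsilon}(\delta)-\delta$; I checked the identity and both cases of the sign split, and they are sound (the case $\delta' < \delta_i$ reduces to $(n-k)(\delta_i-\delta') \geq 0$). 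In effect you re-derive, in single-step quantitative form, what Lemma~\ref{lem:subchain_less_epichain} delivers at the block level. What each approach buys: the paper's proof is shorter and reuses existing lemmas verbatim; yours yields the strictly stronger statement that the lookahead $B_i(l_i)$ is non-decreasing across every theoretical update (not merely bounded below by $B_0$ at the end), which makes the \enquote{performance surplus} intuition of Subsection~\ref{sec:l_epsilon_heuristic} explicit, at the cost of redoing part of the chain machinery.
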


\begin{proof}
If the heuristic update in Algorithm~\ref{alg:l_epsilon_heuristic} is never successfully applied, the process is identical to the theoretical one, and the statement follows directly from Proposition~\ref{stt:orthonormal_bounds}.

Otherwise, suppose $j - 1$ is the last iteration on which heuristic was applied, i.e. $l_{j}$ and $\varepsilon_{j}$  were chosen heuristically (for $m > 1$, $\mathcal{I}_m = (0, \infty)$, and thus $\varepsilon_{j} \in \mathcal{I}_m$) and all subsequent ones were not. The epichain and subchain analysis (Lemma~\ref{lem:subchain_less_epichain}) therefore applies to this final block of iterations. This guarantees that the final smallest eigenvalue is bounded by the performance function evaluated at step $j$:
\[
    \lambda_m(Y_k) \geq B_j(l_j)\,.
\]
The state $(l_j, \varepsilon_j)$ was chosen by the heuristic, so it must have passed the safety check of Algorithm~\ref{alg:l_epsilon_heuristic}. This check explicitly ensures $B_j(l_j) \geq B_0$. Combining these inequalities, $\lambda_m(Y_k) \geq B_j(l_j) \geq B_0$, which completes the proof.
\end{proof}

\subsection{The complete algorithm and main theorem}\label{sec:proof_main}

To generalize the algorithm, we relax the requirement that $X$ must have orthonormal rows. This can be achieved by performing an $\matdec{LQ}$ or singular value decomposition of $X$ and running the algorithm on the resulting matrix with orthonormal rows ($Q$ or $V^T$, respectively). This preprocessing step preserves the theoretical bound, as proven in Theorem~\ref{thm:main}.

The complete pseudocode of the algorithm is presented as Algorithm~\ref{alg:main}. This is a slightly more detailed version of Algorithm~\ref{alg:one_greedy_iteration}-\ref{alg:l_epsilon_heuristic} applied $k$ times, preceded by an $\matdec{LQ}$ decomposition of the input matrix to handle general matrices $X$.

\begin{algorithm}[htbp]
\caption{Deterministic greedy selection algorithm for subset selection.}\label{alg:main}
\begin{algorithmic}[1]
\Require $X \in \R^{m \times n}$ ($m \leq n$, $\rank X = m$), sampling parameter $k \in \overline{m, n}$.
\Ensure set $\cS \subseteq \overline{1,n} $ of cardinality $k$.
\State \textbf{initialize} $\displaystyle \begin{aligned}[t]
        & \cS \gets \varnothing,\ \cR \gets \overline{1, n},\ Y_0 \gets 0_{m\times m}, \\
        & \varepsilon_0 \gets \varepsilon_{opt},\ l_0 \gets -m/\varepsilon_0.
\end{aligned}$ \Comment{$\varepsilon_{opt}$ is defined in~\eqref{eqn:optimal_epsilon}.}
\State Compute thin $\matdec{LQ}$ decomposition of $X$, $X = LQ$. Assign $X \gets Q$.

\For{$i = 0,1,\dots,k-1$}
    \State $\delta_i \gets$ smaller root of~\eqref{eqn:delta_equality} using $l=l_i, \varepsilon=\varepsilon_i$.
    \State \parbox[t]{\dimexpr\linewidth-\algorithmicindent}{%
    Compute $\left(Y_i - (l_i + \delta_i)I\right)^{-1}$ using eigenvalue decomposition of $Y_i$. For each $j \in \cR$, use it to effectively compute $\Phi_{l_i + \delta_i}(Y_i + x_j x_j^T)$  via~\eqref{eqn:Phi_recalculation};
    }
    \[
    s \gets \argmin_{j \in \cR}\ \Phi_{l_i + \delta_i}(Y_i + x_j x_j^T)\,.
    \]
    \State $\cS \gets \cS \cup \{ s \},\ \cR \gets \cR \backslash \{s\}$, $\displaystyle Y_{i + 1} \gets Y_i + x_sx_s^T$.
    \State\label{itm:updating_l} \parbox[t]{\dimexpr\linewidth-\algorithmicindent}{%
    Compute eigenvalue decomposition\footnote{Instead of calculating the eigenvalue decomposition of $Y_{i + 1}$ from scratch, one can use faster rank-1 update~\cite{eigenproblem_recalculation}. In that case, we suggest supplementing State~\ref{itm:updating_l} with $X \gets U^TX,\ Y_{i + 1} \gets \Lambda$, where $Y_{i + 1} = U\Lambda U^T$ is the eigenvalue decomposition. Then $X \gets U^TX$ will be the only step of the Algorithm~with cubic complexity, as~\eqref{eqn:Phi_recalculation} will be calculated in $O (nm)$ for diagonal $Y_{i + 1}$, and eigenvalue decomposition of a rank 1 update of the diagonal $Y_i$ is calculated in $O(m^2)$.   
    } of $Y_{i + 1}$.}
    
    \State Apply Algorithm~\ref{alg:l_epsilon_heuristic} (or Algorithm~\ref{alg:l_epsilon_theory}, if $m = 1$) to obtain $l_{i + 1}$ and $\varepsilon_{i + 1}$.
\EndFor
\State \textbf{return} $\cS$
\end{algorithmic}
\end{algorithm}

\begin{theorem}\label{thm:main}
    There exists a deterministic algorithm (Algorithm~\ref{alg:main}) that, given a full-rank matrix $X \in \R^{m \times n}$ with $m < n$, and sampling parameter $k \in \overline{m, n}$, constructs a subset $\cS \subseteq \overline{1,n}$ of cardinality $k$. The algorithm ensures that $X_\cS$ has full rank, and
    \[
    \normxi{X_\cS^\dag}^2 \leq \frac{n}{m}\left(\frac{\sqrt{(k - 1)m + 1} - 1}{\sqrt{(k - 1)m + 1} - k}\right)^2 \normxi{X^\dag}^2\,,\quad \xi \in \{2,F\}\,.
    \]
    For $m = k = 1$ the bound should be understood in the limit $k \to 1+$, yielding $\normxi{X_S^\dag}^2 \leq n\normxi{X^\dag}^2,\ \xi \in \{2,F\}$. 
    
    Additionally, a slightly looser, but more interpretable bound is given by
    \[
    \normxi{X_\cS^\dag}^2 \leq \frac{n}{\left(\sqrt{k} - \sqrt{m - 1}\right)^2} \normxi{X^\dag}^2\,,\quad \xi \in \{2,F\}\,.
    \]
    
    The algorithm runs in $O(k m^3 + k m T_X)$ operations, where $T_X \geq n$ is the complexity of multiplying $X^T$ by a vector of length $m$. For a general dense matrix the total complexity becomes $O( n k m^2)$.
\end{theorem}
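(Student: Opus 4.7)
The plan is to reduce the general full-rank case to the orthonormal-rows case, which has already been handled in Propositions~\ref{stt:orthonormal_bounds} and~\ref{stt:heuristic_bounds}. Since Algorithm~\ref{alg:main} begins with a thin $\matdec{LQ}$ decomposition $X = LQ$ (with $L \in \R^{m \times m}$ invertible by the full-rank assumption and $Q \in \R^{m \times n}$ having orthonormal rows) and then runs the orthonormal-row procedure on $Q$, those propositions immediately deliver
\[
\norms{Q_\cS^\dag}^2 \leq \frac{n}{m}\left(\frac{\alpha - 1}{\alpha - k}\right)^2, \qquad \alpha = \sqrt{(k-1)m + 1},
\]
with Proposition~\ref{stt:heuristic_bounds} covering $m > 1$ and Proposition~\ref{stt:orthonormal_bounds} covering $m = 1$ (where Algorithm~\ref{alg:l_epsilon_heuristic} is bypassed). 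In particular $\lambda_m(Q_\cS Q_\cS^T) > 0$, so $Q_\cS$ has full row rank, and hence so does $X_\cS = L Q_\cS$.

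Next I would transfer the bound to $X$. From $X_\cS^\dag = Q_\cS^\dag L^{-1}$ and $X^\dag = Q^T L^{-1}$, submultiplicativity $\normxi{AB} \leq \norms{A}\normxi{B}$ (valid for both $\xi = 2$ and $\xi = F$) combined with the isometry property $\normxi{Q^T C} = \normxi{C}$ (which follows from $Q Q^T = I$) yields, uniformly in $\xi \in \{2, F\}$,
\[
\normxi{X_\cS^\dag} \leq \norms{Q_\cS^\dag}\, \normxi{L^{-1}} = \norms{Q_\cS^\dag}\, \normxi{X^\dag}.
\]
Combining with the previous step produces the tighter bound of the theorem; the $m = k = 1$ case is recovered by the limit $\alpha \to 1$, giving $\normxi{X_\cS^\dag}^2 \leq n \normxi{X^\dag}^2$.

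The looser interpretable bound reduces to the scalar inequality $\left(\frac{\alpha - 1}{\alpha - k}\right)^2 \leq \frac{m}{\left(\sqrt{k} - \sqrt{m - 1}\right)^2}$. Using the identities $\alpha^2 - 1 = (k-1)m$ and $k - \alpha^2 = -(k-1)(m-1)$ (which also show $\alpha \leq k$ for $k \geq m$), together with $(\sqrt{k} - \sqrt{m-1})(\sqrt{k} + \sqrt{m-1}) = k - m + 1$, the inequality rearranges into the equivalent form
\[
\sqrt{m}\,(\alpha + k) \leq (\alpha + 1)\left(\sqrt{k} + \sqrt{m - 1}\right),
\]
which after squaring becomes a polynomial inequality verifiable by routine algebra; equality at $m = 1$ follows from $\alpha = \sqrt{k}$.

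For the complexity, the initial $\matdec{LQ}$ decomposition costs $O(nm^2)$. Each of the $k$ iterations performs: (i) an update of the eigendecomposition of $Y_i$ in $O(m^2)$ via the rank-one update noted in the footnote on Step~\ref{itm:updating_l} (or a fresh $O(m^3)$ decomposition); (ii) evaluation of $\Phi_{l_i + \delta_i}(Y_i + x_j x_j^T)$ for all $j \in \cR$ through~\eqref{eqn:Phi_recalculation}, dominated by forming the products $(Y_i - (l_i + \delta_i) I)^{-1} X$ and $(Y_i - (l_i + \delta_i) I)^{-2} X$ in $O(m T_X)$ total; and (iii) a constant number of bisection/golden-section queries inside Algorithm~\ref{alg:l_epsilon_heuristic} (or Algorithm~\ref{alg:l_epsilon_theory}), each costing $O(m)$ once the eigendecomposition is known. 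Summing yields $O(k m^3 + k m T_X)$, which becomes $O(n k m^2)$ in the dense case $T_X = O(n m)$. The only non-mechanical step throughout is the algebraic verification of the interpretable bound, which I expect to be the most tedious part.
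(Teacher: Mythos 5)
Your proposal is correct and follows essentially the same route as the paper: reduce to the orthonormal case via the thin $\matdec{LQ}$ decomposition, transfer the bound through $X_\cS^\dag = Q_\cS^\dag L^{-1}$ using submultiplicativity and the isometry $\normxi{Q^T C}=\normxi{C}$, and account for the per-iteration costs to get $O(km^3+kmT_X)$. The only divergence is in the looser bound, where the paper verifies it directly via $\sqrt{1+a}\leq 1+a/2$ while you defer to "routine algebra" after reducing to $\sqrt{m}\,(\alpha+k)\leq(\alpha+1)(\sqrt{k}+\sqrt{m-1})$ --- that reduced inequality is indeed true and elementary (e.g.\ bound $\sqrt{m}k-\alpha\sqrt{k}\leq\sqrt{m}-1\leq\sqrt{m-1}$ and $\sqrt{m}\alpha-\alpha\sqrt{m-1}\leq\sqrt{k}$ using $\sqrt{k}\leq\alpha\leq\sqrt{km}$), so the omission is cosmetic rather than a gap.
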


\begin{proof}
    \begin{enumerate}[wide]
        \item{Proof of bounds.} 
        To prove tighter bounds, we need to show that $\matdec{LQ}$ decomposition indeed allows us to generalize the bounds from Proposition~\ref{stt:heuristic_bounds} (Proposition~\ref{stt:orthonormal_bounds}, if $m = 1$). Suppose $X = LQ$ is the $\matdec{LQ}$ decomposition of $X$ and $\cS$ is some subset of column indices. Then, for $\xi \in \{2, F\}$
        \[
        \frac{\normxi{X_\cS^\dag}}{\normxi{X^\dag}} = \frac{\normxi{Q_\cS^\dag L^{-1}}}{\normxi{Q^\dag L^{-1}}} = \frac{\normxi{Q_\cS^\dag L^{-1}}}{\normxi{L^{-1}}} \leq \frac{\norms{Q_\cS^\dag} \normxi{L^{-1}}}{\normxi{L^{-1}}} = \norms{Q_\cS}\,,
        \]
        which proves the correctness of tighter bounds. 
        
        Now we prove the correctness of the looser bounds. The case $m=1$ is trivial. For $1 < m \leq k$, we start with the tight bound and factor out $\sqrt{km}$ from numerator and $\sqrt{k}$ from denominator:
        \[
         \frac{n}{m}\left(\frac{\sqrt{(k - 1)m + 1} - 1}{\sqrt{(k - 1)m + 1} - k}\right)^2 = 
            n \left(\frac{\sqrt{1 - \frac{m-1}{km}} - \frac{1}{\sqrt{km}}}{\sqrt{k} - \sqrt{m - 1}\sqrt{1 + \frac{k -m + 1}{k(m - 1)}}}\right)^2\,.
        \]
        Applying inequality  $\sqrt{1 + a} \leq 1 + a/2$ to $\sqrt{1 - \frac{m - 1}{km}}$ and $\sqrt{1 + \frac{k - m + 1}{k(m - 1)}}$ we obtain
        \begin{multline*}
            \frac{n}{m}\left(\frac{\sqrt{(k - 1)m + 1} - 1}{\sqrt{(k - 1)m + 1} - k}\right)^2 \leq 
            n \left(\frac{1 - \frac{m-1}{2km} - \frac{1}{\sqrt{km}}}{\sqrt{k} - \sqrt{m - 1} + \frac{k -m + 1}{2k\sqrt{m - 1}}}\right)^2 = \\
            = \frac{n}{\left(\sqrt{k} - \sqrt{m - 1}\right)^2} \left(\frac{1 - \frac{m - 1}{2km} - \frac{1}{\sqrt{km}}}{1 - \frac{\sqrt{k} + \sqrt{m - 1}}{2k\sqrt{m-1}}}\right)^2  \overset{(*)}{\leq} 
            \frac{n}{\left(\sqrt{k} - \sqrt{m - 1}\right)^2}\,,
        \end{multline*}
        $(*)$ follows from comparing numerator and denominator of the remaining fraction:
        \[
            \frac{m-1}{2km} + \frac{1}{\sqrt{km}} \geq \frac{\sqrt{k} + \sqrt{m - 1}}{2k\sqrt{m-1}} \Longleftrightarrow
            \sqrt{\frac{m - 1}{m}} \left( 2 - \frac{1}{\sqrt{km}} \right) \geq 1\,,
        \]
        where the left-hand side of the latter is monotonously increasing function of $m$ and $k$, and the inequality holds even in the worst case $k = m = 2$. 
        
        \item{Proof of asymptotic complexity.} The $\matdec{LQ}$ decomposition requires $O(nm^2)$ operations. The new matrix $X_{new}^T = Q^T$ after that still allows for fast multiplication by an arbitrary vector $v \in \R^m$, since $X_{new}^Tv = X_{old}^TL^{-T}v$, where the right-hand side requires $O(m^2 + T_X)$ operations.
        
        The $\matdec{LQ}$ decomposition is followed by $k$ iterations. On each iteration, we perform the following steps:
        \begin{enumerate}
            \item Calculate $\delta_i$ in $O(1)$ operations.
            \item Evaluate $\Phi_{l_i + \delta_i}(Y_i + x_j x_j^T)$ for $n - i$ columns. Using~\eqref{eqn:Phi_recalculation}, this can be done in $O(m^3 + mT_X)$ operations.
            \item Compute the eigenvalue decomposition of $Y_{i + 1}$, it requires $O \left(m^3 \right)$ operations.
            \item Update $l$ and $\varepsilon$. Since both $B_{i + 1}(l)$ and $\Phi_l(Y_{i + 1})$ can be computed in $O(m)$, applying bisection method or golden-section search on them is $O \left(m \right)$. Those algorithms are applied at most $3$ times, which makes the total cost of this step $O(m )$.
        \end{enumerate}
        Combining all mentioned steps, we obtain an overall asymptotic complexity of $O(km^3 + kmT_X)$ operations.\qedhere
    \end{enumerate}
\end{proof}

\subsection{Relation to previous studies}

Our approach is a direct refinement of the spectral sparsification framework established in~\cite{twice_ramanujan_sparcifiers} and adapted for subset selection in~\cite{faster_subset_selection}. These foundational methods utilize a dual-barrier structure to control both the smallest and largest eigenvalues. This process necessarily produces a set of non-binary column weights that must subsequently be converted into an unweighted selection.

Our key insight is that for the specific goal of minimizing the pseudoinverse norm, the upper barrier is unnecessary. We specialize this framework to a single barrier function. This simplification yields two significant advantages over the prior art. First, it enables direct, unweighted column selection, which is the foundation for our improved theoretical guarantees. Second, it provides the flexibility to develop a powerful adaptive update strategy for the barrier $l$, which is key to the algorithm's excellent practical performance.

\section{Numerical experiments}\label{sec:numerical_experiments}

We have implemented the subset selection algorithms and testing framework in C++ using the Eigen library for efficient matrix and vector operations, as well as numerical algorithms. For plotting and visualization, we utilize Matplotlib Python. The complete codebase, including examples and documentation, is openly available on GitHub as a compact header-only library \url{https://github.com/KozyrevIN/subset-selection-for-matrices}.

In our experiments, we compare the performance of Algorithm~\ref{alg:main} with that of other algorithms described in Subsubsection~\ref{sec:approximation_algorithms}. Short codenames of all compared methods are presented in Table~\ref{tbl:short_names}.

\begin{table}[htbp]
\centering
\begin{NiceTabular}{|l|l|}
\hline
Codename & Subset selection method \\
\hline \hline
spectral selection & Algorithm~\ref{alg:main} \\
\hline
spectral removal  & Algorithm~2 in~\cite{faster_subset_selection}  \\
\hline
dual set  & Algorithm~3 in~\cite{faster_subset_selection}  \\
\hline
random columns  & randomly selected $k$ columns \\
\hline
\end{NiceTabular}
\caption{Correspondence between algorithms and their codenames used on figures.}
\label{tbl:short_names}
\end{table}

Initially, we intended to include Algorithm~1 from~\cite{subset_selection_with_fixed_blocks} in our testing, but the straightforward implementation proved to be numerically unstable. The reasons for this instability are rooted in operations involving characteristic polynomials. Let $p_\cS(x)$ denote the characteristic polynomial of the matrix $X_\cS$ for a given set of column indices $\cS$ of cardinality less than $k$. The authors provide the following formula (Equation~22 in~\cite{subset_selection_with_fixed_blocks}) for the \enquote{expected} characteristic polynomial, whose smallest root is of interest:
\[
f_\cS(x) = \frac{(n - k)!}{\left(n - |\cS|\right)!}(x-1)^{-(n-m-k)}\partial_x^{k-|\cS|}(x-1)^{n-m-|\cS|}p_\cS(x)\,.
\]

While $f_\cS$ can be computed effectively in the polynomial basis $\{1,y,\dots,y^{n-|\cS|}\}$, where $y=x-1$, the resulting $f_\cS(y)$ has roots clustered near $-1$ when $k$ is small compared to $n$. This clustering renders the task of finding the smallest root extremely ill-conditioned~\cite{wilkinson1984perfidious}. Furthermore, simply reverting to the original variable $x = y + 1$ does not alleviate the issue, as it leads to catastrophic cancellations in the polynomial coefficients. These arguments are in good agreement with the experiment: we observed the emergence of negative and complex roots of $f_\cS(y)$ even for moderate values of $k$, $m$ and $n$ (e.g., $k = m = 5$, $n = 100$), while larger values of $k$ in otherwise identical setups yielded satisfactory performance. Stabilizing the algorithm remains an open question for future research.

\subsection{Experimental methodology}

The experiments were conducted on matrices of a fixed size $m=100$, $n=5000$. We varied the number of selected columns $k$ from $100$ to $5000$, generating $32$ random matrices for each value of $k$. The performance of the algorithms was evaluated using the metric $\norms{X^\dag}/\norms{X_\cS^\dag}$, where larger values correspond to a better result. The plots show the mean values of the metric, standard deviations, and theoretical guarantees.

It should be emphasized that all algorithms, as required in Problem~\ref{prm:subset_selection}, return submatrices of full rank. A random selection of columns, however, can lead to a singular submatrix. To demonstrate this, we use the following convention in the plots: if the submatrix $X_\cS$ is singular, then $\normxi{X_\cS^\dag}^2 = \infty$, which makes the metric value zero.

\subsection{Experiment 1: Matrices with orthonormal rows}

We use matrices with orthonormal rows, sampled from Circular Orthogonal Ensemble~\cite{9eafeb2573aa4d7a9d3f0f17ec8c9af5}. This scenario models one of the key applications of Problem~\ref{prm:subset_selection}~--- selecting rows/columns from a matrix of leading singular vectors to construct low-rank approximations or to select key features. 

\begin{figure}[htbp]
\centering
\includegraphics[width=\textwidth]{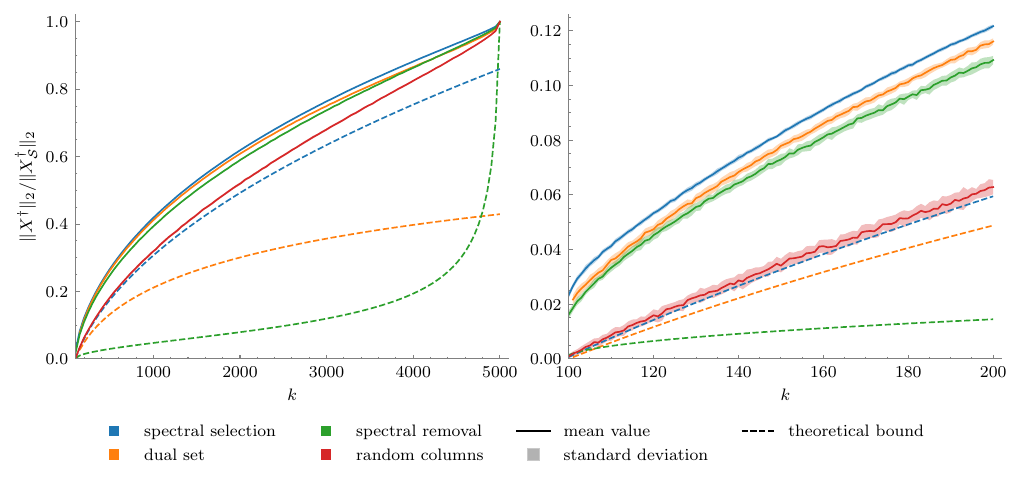}
\caption{Algorithm performance on matrices with orthonormal rows sampled from the Circular Orthogonal Ensemble ($m=100, n=5000$).}
\label{fig:tests_orthonormal}
\end{figure}

As shown in Figure~\ref{fig:tests_orthonormal}, our proposed algorithm (spectral selection) consistently outperforms the other deterministic methods across the entire range of $k$, with the gap between it and other algorithms especially perceptible for small values of $k$.

\subsection{Experiment 2: Incidence matrices of a random graph}

In this experiment, we test the algorithms on a problem related to graph theory: finding a spanning sub-graph with high algebraic connectivity~\cite{lamperskisimple}. The input matrices for this task are constructed from the singular vectors of a graph's incidence matrix. Specifically, we select columns from the matrix $V^T$, where $V$ contains the first $m$ right singular vectors of the oriented edge-vertex incidence matrix of a random weighted connected graph. 

To generate those matrices, we followed a four-step procedure: 
\begin{enumerate}
    \item Generated an unweighted graph with $m + 1$ vertices and $n$ edges from a uniform distribution.
    \item Verified the graph's connectivity and retried if necessary.
    \item Assigned a weight uniformly sampled from $(0, 1)$ to each edge.
    \item Performed a truncated singular value decomposition of the resulting edge-vertex incidence matrix to obtain an $m \times n$ matrix of its singular vectors.
\end{enumerate}

\begin{figure}[htbp]
\centering
\includegraphics[width=\textwidth]{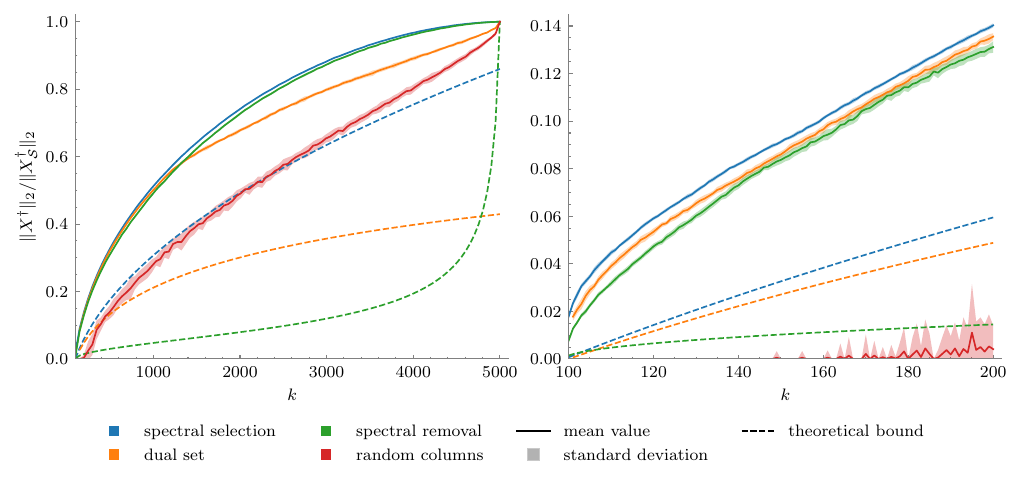}
\caption{Algorithm performance on incidence matrices of the random weighted connected graph ($m=100, n=5000$).}
\label{fig:tests_weighted_graph}
\end{figure}

The results presented in Figure~\ref{fig:tests_weighted_graph} corroborate the findings from the first experiment. Our algorithm again demonstrates superior practical performance, achieving the best metric value among all tested deterministic methods.

\section{Conclusion}

In this paper, we addressed the subset selection problem for matrices, focusing on the development of a deterministic greedy algorithm to select $k$ columns from a matrix such that the spectral norm of the resulting submatrix's pseudoinverse is minimized. Our proposed method, Algorithm~\ref{alg:main}, builds upon the spectral sparsification framework~\cite{twice_ramanujan_sparcifiers, column_based_reconstruction, faster_subset_selection} but introduces key modifications which allow us to tailor the selection process specifically for this objective while maintaining the same $O(nkm^2)$ asymptotic complexity.

The primary theoretical contribution of our work is a new, stronger bound on the resulting norm of the pseudoinverse, which is formally stated in Theorem~\ref{thm:main}. To our knowledge, the presented bound is the best available one for the spectral norm when $m + 3 < k \leq n/m - 1$. These improved guarantees have direct implications for other areas of numerical linear algebra. The accuracy of column-based $\matdec{CW}$ and $\matdec{CUR}$ low-rank matrix approximations is fundamentally linked to the solution of the subset selection problem; thus, our work directly translates to tighter accuracy bounds for these important techniques.

To validate these theoretical advances and facilitate further research, we developed a comprehensive C++ implementation of our algorithm and its key competitors within a robust testing framework, which is made publicly available. Our numerical experiments, conducted using this framework, confirm the practical effectiveness of our algorithm, showing that it consistently outperforms existing state-of-the-art deterministic methods.

In summary, our research provides a new, practically effective tool for subset selection for matrices that advances the state-of-the-art with stronger theoretical guarantees in key parameter regimes, while also contributing a valuable open-source implementation for future applications and comparative studies.

\section*{Acknowledgements}
The research was funded by the Russian Science Foundation (project No. 25-21-00159).

\bibliographystyle{abbrv}
\bibliography{refs}

\end{document}